\newif\ifmtpro 
\newtheorem{lemma}{Lemma}
\newtheorem{theorem}{Theorem}
\theoremstyle{definition}
\newtheorem{definition}{Definition}
\newtheorem{remark}{Remark}
  \newcommand*\bigcdot{\mathpalette\bigcdot@{1}}
  \newcommand*\bigcdot@[2]{
    \mathbin{\hbox{\scalebox{#2}{$\m@th#1\begin{tikzpicture}
            \draw[line width=.7pt] (0,0) circle
            (1pt);\end{tikzpicture}$}}}
  }
\newcommand{\E}{\mathbb{E}}
\newcommand{\Prb}{\mathbb P}
\newcommand{\R}{\mathbb R}
\newcommand{\Z}{\mathbb Z}
\newcommand{\N}{\mathbb N}
\newcommand{\Ind}{{\bf 1}}
\newcommand{\Eq}{F^{-1}(0)}
\newcommand{\sL}{\mathfrak{L}}
\newcommand{\D}{\mathfrak D}
\newcommand{\TsD}{\mathrm{T}\D} 
\newcommand{\iD}{\accentset{\bigcdot}{\D}}
\newcommand{\Ent}{\mathrm{Ent}}
\newcommand{\JacF}{\mathit{JF}}
 \newcommand{\CR}{\mathscr R(\Phi)}
 \newcommand{\Bhood}{\mathscr B}
 \newcommand{\F}{\mathscr F}
 \newcommand{\undd}[1]{\underset{\wcheck{}}{#1}}
 \newcommand{\CR}{\mathcal R(\Phi)}
 \newcommand{\Bhood}{\mathcal B}
 \newcommand{\F}{\mathfrak F}
 \newcommand{\undd}[1]{\underset{\widehat{}}{#1}}
\begin{document}

\title[Interacting Vertex Reinforced Random Walks]{Vertex reinforced
  random walks with exponential interaction on complete graphs}
\author[B. Pires]{Benito Pires}
\thanks{The first author was supported by grant \#2019/10269-3 S\~ao
  Paulo Research Foundation (FAPESP)}
\author[F. P. A. Prado]{Fernando P. A. Prado}
\address[B. Pires, F. P. A. Prado and R. A. Rosales]{Departamento de
  Computa\c{c}\~ao e Matem\'atica, Faculdade de Filosofia, Ci\^encias
  e Letras, Universidade de S\~ao Paulo, 
   14040-901, Ribeir\~ao Preto - SP, Brazil}
\email{benito@usp.br, prado1@usp.br, rrosales@usp.br} 
\author[R. A. Rosales]{Rafael A. Rosales}

\subjclass[2010]{Primary 60K35; secondary 37C10, 60J10, 60G50}
\keywords{reinforced random walk, stochastic approximation,
  stability}
\date{28/12/2020}
% \date{
%  \textcolor{choco}{\texttt{\today} --
%  \texttt{\currenttime} --
%  \texttt{merge of: lyapunov\_V10, lyapunov-benito} -- \texttt{RCS
%    1.19}}
%}

\maketitle

\begin{abstract}
We describe a model for $m$ vertex reinforced interacting random walks
on complete graphs with $d\geq 2$ vertices. The transition probability
of a random walk to a given vertex depends exponentially on the
proportion of visits made by all walks to that vertex. The individual
proportion of visits is modulated by a strength parameter that can be
set equal to any real number. This model covers a large variety of
interactions including different vertex repulsion and attraction
strengths between any two random walks as well as self-reinforced
interactions. We show that the process of empirical vertex occupation
measures defined by the interacting random walks converges (a.s.) to
the limit set of the flow induced by a smooth vector field. Further,
if the set of equilibria of the field is formed by isolated points,
then the vertex occupation measures converge (a.s.) to an equilibrium
of the field. These facts are shown by means of the construction of a
strict Lyapunov function. We show that if the absolute value of
the interaction strength parameters are smaller than a certain upper 
bound, then, for any number of random walks ($m\geq 2$) on any graph
($d \geq 2$), the vertex occupation measure converges toward a unique
equilibrium. We provide two additional examples of repelling random
walks for the cases $m=d=2$ and $m=3$, $d=2$. The latter is used to
study some properties of three exponentially repelling random walks on
$\Z$.
\end{abstract}

%\tableofcontents

\section{Introduction}\label{sec:intro}
Let $G=(E,V)$ be a finite complete graph with $d\geq 2$ vertices and
let $W = \{(W^1(n)$, $\ldots$, $W^m(n))\}_{n \geq 1}$ be a process
described by $m \geq 2$ interacting random walks on $G$. The process
$W$, defined on a suitable probability space $(\Omega, \F, \Prb)$, is
constructed as follows. For each $v \in [d] = \{1, 2, \ldots, d\}$ and
$i \in [m]$, let $X^i_v(0) = 1$, and then for $n\geq 1$ let $X_v^i(n)$
be the empirical occupation measure of the vertices by the $i$-th
walk, that is,
\begin{equation}
  \label{eqn:occupation_measure}
  X_v^i(n) = \frac{1}{d+n}\Big(1+\sum_{k=1}^n
  \Ind{\big\{W^i(k) = v\big\}}\Big). 
\end{equation}
%Let $(\Omega, \F, \Prb)$ be a suitable probability space and then 
Define $\F_0 = \{\Omega, \emptyset\}$ and $\F_n = \sigma(W(k): 1 \leq
k \leq n)$ as the filtration generated by $W$ up to time $n \geq
1$. Then, for all $v \in [d]$, $i, j \in [m]$ and $n\geq 0$
define the transition probability of $W$ as
\begin{equation}
\label{eqn:trans_prob}
  \Prb\big(W^i(n+1)=v\mid\F_n\big) = 
  \frac{\exp \Big(\sum_{j=1}^m \alpha_v^{ij} X_v^{j}(n)\Big
    )}{\sum_{u=1}^d \exp\Big(\sum_{j=1}^m \alpha_u^{ij}
    X_u^{j}(n) 
    \Big)},
\end{equation}
where
\begin{equation}
 \label{eqn:strengths}
   \alpha_v^{ij} \in \R
   \quad\text{is such that}\quad
   \alpha_v^{ij} = \alpha_v^{ji}.
\end{equation}
By using \eqref{eqn:occupation_measure}, set
$X^i(n)=\big(X_1^i(n),\ldots,X_d^i(n)\big)$ and then denote by $X$ the
\textit{process of vertex occupation measures} defined as
\begin{equation}\label{theX}
X=\{X(n)\}_{n\ge 0}, \quad\textrm{where}\quad X(n)=(X^1(n),
X^2(n),\ldots, X^m(n)). 
\end{equation}
Notice that $\{X(n)\}_{n \geq 1}$ belongs to the compact convex set
$\mathfrak{D}=\triangle^m=\triangle\times\cdots\times\triangle$, which
equals the $m$-fold Cartesian product of the $(d-1)$-simplex
$\triangle = \{x=(x_v) \in \R^d \mid x_v \geq 0, \sum_v x_v = 1\}$
with itself. In these terms, the process $W$ is completely defined by
specifying the initial condition $X(0)$ and the smooth map
$\pi=(\pi^1_1, \ldots, \pi^1_d, \ldots, \pi^m_1, \ldots,
\pi^m_d):\mathfrak{D}\to\mathfrak{D}$ which at $x=(x^1_1, \ldots,
x^1_d, \ldots, x^m_1, \ldots, x^m_d)$ takes the value
\begin{equation} \label{eqn:pi}
  \pi^i_v(x) = 
   \frac{\exp\big(\sum_{j=1}^m \alpha_v^{ij} x_v^j\big)} {\sum_{u=1}^d
   \exp\big(\sum_{j=1}^m \alpha_u^{ij} x_u^j\big)},
\end{equation}
with $\alpha_v^{ij}$ as the constants satisfying
\eqref{eqn:strengths}. In fact, rewriting \eqref{eqn:trans_prob} in
terms of $\pi$ gives
\[
 \Prb\big(W^i(n+1)=v\mid\F_n\big) =
 \pi_v^i\big(X(n)\big).
\]

It is worth mentioning that (\ref{eqn:trans_prob}) and
(\ref{eqn:strengths}) cover a large variety of interactions which
include different vertex repulsion and attraction strengths between
$m\geq 2$ interacting walks on $G$. In this context, $\alpha_v^{ij}$
stands for the strength of reinforced repulsion (when $\alpha_v^{ij} <
0$) or attraction (when $\alpha_v^{ij} > 0$) between walks $i$ and $j$
at vertex $v$. This may also include self-reinforced interactions
(repulsion or attraction) when $\alpha_v^{ii} \neq 0$.

According to (\ref{eqn:trans_prob}) and (\ref{eqn:strengths}), the
probability of a transition of walk $i$ to a given vertex $v$ at time
$n+1$ depends on the proportions $X_v^{1}(n), X_v^{2}(n), \ldots,
X_v^{m}(n)$ of the visits to vertex $v$ made by all $m$ walks up to
time $n$.  The process $X=\{X(n)\}_{n\ge 0}$ studied throughout
belongs therefore to a family of processes known as vertex reinforced
random walks. There is an extensive literature devoted to
self-attracting reinforced random walks on graphs, see for instance
\cite{P92}, \cite{BT11}, \cite{V01}, and self-repelling walks, see
\cite{T95} and references therein. Several models for interacting
generalised P\'olya urn models have been considered more recently, see
\cite{ACG20}, \cite{CDM16}, \cite{BBCL15}, and \cite{vH16}. Apart from
\cite{C2014} and \cite{CDLM19}, there are relatively few
studies of interacting vertex-reinforced random walks. \cite{C2014}
considers two repelling random walks on finite complete graphs and
focuses on the asymptotic properties of their overlap
measure. \cite{CDLM19} presents a model for several cooperative walks 
on the two vertex graph and describes their synchronisation toward a
common limit.

This article is principally concerned with the asymptotic properties
of the process of vertex occupation measures $X=\{X(n)\}_{n\ge 0}$. A
first step to characterise the long-term behaviour of $X$ consists in
identifying this process with a stochastic approximation.  Stochastic
approximations have been quite effective while dealing with several
self-reinforced processes such as vertex reinforced random walks,
generalised P\'olya urns and population games, see \cite{P07} for a
survey and further references.  In particular, the identification with
a stochastic approximation allows to study the asymptotic behaviour of
the interacting walks by following the dynamical system approach
described in \cite{B96, B99}. Let
$
  \TsD = \big\{x \in \R^{dm}\ \big|\ \sum_v x_v^i = 0\ \text{ for each
  }\  i \in [m]\big\}
$
be the tangent space of $\D$. We show that the process of vertex
occupation measures $X=\{X(n)\}_{n\ge 0}$ can be understood
if we know the asymptotic behaviour of the vector field $F:\D \to
\TsD$ defined by
\begin{equation}
 \label{vectorfieldF}
  F(x)=-x+\pi(x),
\end{equation}
where $\pi$ is given in \eqref{eqn:pi}. To study the long-term
behaviour of the vector field $F$, we adapt the arguments of
\cite{BDFR015_II} and \cite{BDFR015_I} to construct an explicit strict 
Lyapunov function for the vector field.  A key observation for
the construction of this function is based on the fact
that the relative entropy between two solutions of the ordinary
differential equation
\begin{equation}
  \label{eqn:ODE}
  \dot{x}=F(x)
\end{equation}
is strictly decreasing ouside the set of equilibria of the vector
field $F$, namely $F^{-1}(0) = \{x \in \D \mid F(x) = 0\}$.  This
argument was put forward in~\cite{BDFR015_II, BDFR015_I}
while considering non-linear Markov processes with Gibbsian type
interactions. These ideas where also considered and further extended
in \cite{B15}, in order to construct Lyapunov functions for several
examples of self-reinforced processes arising in population games and
vertex reinforced random walks.

The main contribution of this article is to extend the ideas in 
\cite{BDFR015_I, BDFR015_II} to the case of a vector of measures that
represent the vertex occupation defined by many interacting reinforced
random walks.

\section{Statement of the  results}
\subsection{Main results}
Our first result, stated as Theorem~\ref{th:Lyapunov}, shows that the
vector field $F$ defined by \eqref{vectorfieldF} has a strict
Lyapunov function. This result is crucial to understand the long-term
behaviour of the vector field $F$ and the process $X=\{X(n)\}_{n\ge
0}$ defined by (\ref{theX}).

To state our first result, denote by $\Phi=\{\phi_t\}_{t\ge 0}$ the
semi-flow associated with $F$ (see Lemma~\ref{flow}) and by
$F^{-1}(0)\subset\D$ the set of equilibrium points of $F$. We will
need the following definition.
 
\begin{definition}[strict Lyapunov function]\label{slf} A continuous
function $L:{\mathfrak{D}}\to\mathbb{R}$ is a \emph{strict Lyapunov
function} for the vector field $F : \D \to \TsD$ if the function $t\in
[0,\infty)\mapsto L\big(\phi_t(x_0)\big)$ is strictly decreasing for
all $x_0\in {\D}{\setminus}F^{-1}(0)$.
\end{definition}

Now we are able to state our first result.
 
\begin{theorem}\label{th:Lyapunov}
The continuous function $L:\D \to \R$ defined by
\begin{equation}\label{eqn:Lyapunov}
  L(x) =  \sum_{i, v} x^i_v\log( x^i_v)  - \frac{1}{2}\sum_{i,j,v} 
  \alpha_v^{ij} x^i_v x^j_v, \quad \text{with  } \,   0\log(0)
  = 0,
\end{equation}
is a strict Lyapunov function for the vector field
$F$ defined by (\ref{vectorfieldF}).
\end{theorem}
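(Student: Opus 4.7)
The plan is to show that $\frac{d}{dt}L\bigl(\phi_t(x_0)\bigr) = \bigl\langle \nabla L(x), F(x)\bigr\rangle$ evaluated at $x=\phi_t(x_0)$ is strictly negative whenever $x\notin \Eq$; strict monotonicity of $t\mapsto L(\phi_t(x_0))$ then follows, since smoothness of the ODE \eqref{eqn:ODE} and local backward uniqueness forbid an orbit starting outside $\Eq$ from meeting $\Eq$ in positive time. Because $x\mapsto x\log x$ has a singular derivative at $0$, I would begin by checking that every forward orbit enters the relative interior $\mathring{\D}$ for all $t>0$: since $\pi_v^i(x)>0$ on all of $\D$, whenever $x_v^i=0$ one has $F_v^i(x)=\pi_v^i(x)>0$, so the boundary is instantly repelling and the gradient computation is legitimate on $(0,\infty)$; continuity of $L$ at $t=0$ then extends strict decrease to $[0,\infty)$.

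For the calculation itself, the symmetry $\alpha_v^{ij}=\alpha_v^{ji}$ collapses the two differentiations of the quadratic term into one, giving
\begin{equation*}
  \frac{\partial L}{\partial x_v^i}(x) \;=\; \log x_v^i + 1 - \sum_{j=1}^{m} \alpha_v^{ij}\, x_v^j.
\end{equation*}
The key algebraic observation is that the exponent appearing in \eqref{eqn:pi} is exactly $\sum_j \alpha_v^{ij} x_v^j$, so taking logarithms there yields
\begin{equation*}
  \sum_{j=1}^{m} \alpha_v^{ij}\, x_v^j \;=\; \log \pi_v^i(x) + \log Z^i(x), \qquad Z^i(x) \;=\; \sum_{u=1}^{d} \exp\Bigl(\sum_{j=1}^{m} \alpha_u^{ij}\, x_u^j\Bigr),
\end{equation*}
and crucially $Z^i(x)$ does not depend on $v$. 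Substituting into $\langle\nabla L, F\rangle$ with $F_v^i(x)=\pi_v^i(x)-x_v^i$, and using $\sum_v \pi_v^i(x)=\sum_v x_v^i=1$ to kill all $v$-independent contributions (the $+1$ and $-\log Z^i(x)$ terms), I obtain
\begin{equation*}
  \frac{d}{dt} L\bigl(\phi_t(x_0)\bigr) \;=\; -\sum_{i=1}^{m}\sum_{v=1}^{d} \bigl(\log x_v^i - \log \pi_v^i(x)\bigr)\bigl(x_v^i - \pi_v^i(x)\bigr).
\end{equation*}

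By the elementary inequality $(\log a - \log b)(a-b)\geq 0$ for $a,b>0$, with equality iff $a=b$, each summand is nonnegative, so the whole expression is $\leq 0$ and vanishes precisely when $x_v^i=\pi_v^i(x)$ for all $i,v$, i.e.\ precisely on $\Eq$. Combined with the backward-uniqueness remark above, this gives the strict monotonicity required by Definition~\ref{slf}. I expect the only delicate point to be the boundary/domain check in the first paragraph; the remainder is a clean computation whose essence is that the symmetry condition on $\alpha$ is exactly what is needed for the gradient of the quadratic part of $L$ to match the log of the transition kernel $\pi$, turning $\langle \nabla L, F\rangle$ into a relative-entropy-type dissipation between $x$ and $\pi(x)$.
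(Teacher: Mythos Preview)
Your argument is correct, and it is considerably more direct than the paper's own proof. The paper establishes Theorem~\ref{th:Lyapunov} via a detour through relative entropy: it first rewrites the ODE in Kolmogorov forward form $\dot x = x\,\Gamma(x)$ (Lemmas~\ref{la1}--\ref{la2}), then shows (Lemma~\ref{lem:Step1}) that $\frac{d}{dt}L(x(t))\big|_{t=\tau}$ coincides with $\frac{d}{dt}\Ent\bigl(x(t)/\pi(x(\tau))\bigr)\big|_{t=\tau}$, and finally (Lemma~\ref{lem:Step2}) expresses that entropy derivative as a sum involving the convex function $\ell(z)=z\log z - z + 1$ and the off-diagonal entries of $\Gamma^i$, from which nonpositivity and the equality case are read off. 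You instead compute $\langle\nabla L,F\rangle$ directly, use the Gibbs form of $\pi$ to replace $\sum_j\alpha_v^{ij}x_v^j$ by $\log\pi_v^i(x)+\log Z^i(x)$, kill the $v$-independent pieces via $\sum_v F_v^i=0$, and land on $-\sum_{i,v}(\log x_v^i-\log\pi_v^i)(x_v^i-\pi_v^i)$, which is nonpositive by monotonicity of $\log$; in fact this is exactly minus the symmetrized Kullback--Leibler divergence between $x$ and $\pi(x)$. The paper's route, inherited from \cite{BDFR015_I,BDFR015_II}, has the conceptual payoff of placing the result in the general framework of entropy dissipation for nonlinear Markov processes (and would generalise to rate matrices $\Gamma^i$ not of the rank-one form $\Pi^i-I$), whereas your route exploits the specific exponential structure of $\pi$ to obtain a shorter, self-contained computation. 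Your handling of the boundary (orbits enter $\mathring\D$ for $t>0$, so the gradient is well defined) and of the backward-uniqueness point (an orbit from outside $F^{-1}(0)$ cannot reach an equilibrium in finite positive time) matches what the paper does in Lemma~\ref{flow} and in the final paragraph of its proof.
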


\begin{definition}[linearly stable/unstable equilibria]
Let $\sigma\big(\JacF(x)\big)\subset\mathbb{C}$ be the set of
eigenvalues of the Jacobian matrix of the vector field $F$ at a point
$x \in \D$. We say that an equilibrium point $x$ of $F$ is
\textit{hyperbolic} if $\sigma\big(\JacF(x)\big)$ contains no
eigenvalue with zero real part. An hyperbolic equilibrium point $x$ of
$F$ is \textit{linearly stable} if $\sigma\big(\JacF(x)\big)$ contains
only eigenvalues with negative real parts, otherwise we say that the
hyperbolic equilibrium point $x$ is \textit{linearly unstable}.
\end{definition}

Our second result, stated below, characterises the convergence and
non-convergence of $X$ toward the equilibria of the vector field
$F$. The almost sure convergence described by the last item in this
theorem is a consequence of Theorem~\ref{slf}.

\begin{theorem}\label{thm11} Let $X=\{X(n)\}_{n\geq 0}$ be the process
defined in (\ref{theX}) and $F$ the vector field defined in
(\ref{vectorfieldF}). The following statements hold
\begin{enumerate}[(i), nosep]
\item For each linearly stable equilibrium point $x$ of $F$,
\[
    \mathbb{P}\Big(\lim_{n\to\infty} X(n)=x\Big)>0;
\]
\item  For each linearly unstable equilibrium point $x$ of
$F$,
\[
  \mathbb{P}\Big(\lim_{n\to\infty} X(n)=x\Big)=0;
\]
\item  If the equilibrium points of $F$ are isolated, then 
\[
    \sum_{x\in F^{-1}(0)} \mathbb{P}\Big(\lim_{n\to\infty}
    X(n)=x\Big)=1.
\]
\end{enumerate} 
\end{theorem}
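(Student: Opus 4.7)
The plan is to view $X=\{X(n)\}_{n\ge 0}$ as a Robbins--Monro stochastic approximation of the ODE $\dot{x}=F(x)$ and then to combine the dynamical-systems framework of \cite{B99} with the strict Lyapunov function supplied by Theorem~\ref{th:Lyapunov}. Unwinding \eqref{eqn:occupation_measure}--\eqref{eqn:trans_prob} gives
\begin{equation*}
X(n+1)-X(n)=\frac{1}{n+d+1}\bigl(F(X(n))+U_{n+1}\bigr),
\end{equation*}
with $U^i_{v,n+1}=\Ind\{W^i(n+1)=v\}-\pi^i_v(X(n))$, so $\E[U_{n+1}\mid\F_n]=0$ and $\|U_{n+1}\|$ is uniformly bounded. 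With step sizes $\gamma_n=1/(n+d)$ satisfying $\sum\gamma_n=\infty$, $\sum\gamma_n^2<\infty$, standard arguments (\cite{B99}, Propositions~4.1--4.2) then imply that, almost surely, the limit set $\sL(\{X(n)\})$ is an internally chain transitive (ICT) set for the semi-flow $\Phi$.

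For part~(iii), I would invoke the standard characterization that, under a strict Lyapunov function $L$ whose image $L(F^{-1}(0))$ has empty interior in $\R$, every ICT set is contained in $F^{-1}(0)$ and $L$ is constant on it (\cite{B99}, Proposition~6.4). Since the equilibria are assumed isolated, compactness of $\D$ makes $F^{-1}(0)$ finite, so $L(F^{-1}(0))$ is finite and has empty interior. Moreover, an ICT set contained in a finite set of equilibria must reduce to a singleton: a putative chain from $x^\ast$ to a distinct equilibrium $y^\ast$ would require a jump of size at least $|x^\ast-y^\ast|$, contradicting the definition of $\varepsilon$-chain for $\varepsilon<|x^\ast-y^\ast|$. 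This yields $\sum_{x\in F^{-1}(0)}\Prb(\lim_n X(n)=x)=1$.

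For part~(ii), I would apply the non-convergence theorem for linearly unstable equilibria in the Pemantle--Bena\"im theory (\cite{B99}, Theorem~9.1 together with Corollary~9.6). The hypothesis to verify is that the conditional covariance of $U_{n+1}$ projects non-degenerately onto the unstable subspace of $\JacF(x^\ast)$. Because the $m$ walks take their $(n+1)$-st step conditionally independently given $\F_n$, the covariance of $U_{n+1}$ is block-diagonal with blocks equal to the multinomial covariance matrices associated to $\pi^i(X(n))$. By compactness of $\D$ and strict positivity of the exponential in \eqref{eqn:pi}, one has $\pi^i_v(x)\ge c>0$ uniformly on $\D$, hence these multinomial blocks are uniformly bounded below when restricted to $\TsD$. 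This is exactly the required non-degeneracy, yielding $\Prb(\lim_n X(n)=x^\ast)=0$ for any linearly unstable $x^\ast$.

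Part~(i) follows from the positive-probability attraction result for stochastic approximations (\cite{B99}, Theorem~7.3): a linearly stable equilibrium $x^\ast$ is an attractor of $\Phi$, and it is attainable by $X$ because the transition probabilities \eqref{eqn:trans_prob} are strictly positive on all of $\D$, so a shadowing argument along an ODE orbit entering the basin shows that $\{X(n)\}$ enters any preassigned neighbourhood of $x^\ast$ with positive probability. I expect the main obstacle to be the verification of the noise non-degeneracy in~(ii); the remaining pieces are essentially direct applications of the dynamical-systems machinery of \cite{B99} once the stochastic approximation structure and Theorem~\ref{th:Lyapunov} are in place.
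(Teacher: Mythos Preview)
Your proposal is correct and follows essentially the same route as the paper: cast $X$ as a Robbins--Monro scheme, use Theorem~\ref{th:Lyapunov} together with the chain-recurrence machinery of \cite{B96,B99} for~(iii), attainability plus Theorem~7.3 of \cite{B99} for~(i), and the Pemantle/Bena\"im non-convergence theorem for~(ii). The only minor difference is in the verification of the noise condition for~(ii): the paper bounds $\E[\langle\theta,U_n\rangle^+\mid\F_n]$ directly by conditioning on the event that each walk jumps to the vertex maximising its component of $\theta$ (Lemma~\ref{lem:boundfrombellow}), whereas you go through uniform positive-definiteness of the block-diagonal multinomial covariance on $\TsD$; both arguments are valid and lead to the same conclusion.
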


\subsection{Examples}
This section presents a few results concerning several specific
instances of the general model described in the Introduction. The
first two results consider the asymptotic behaviour of any (i.e. $m\geq 
2$) ``weakly interacting'' random walks on any finite complete graph
with $d\geq 2$ vertices. First, by using the global injectivity result
in \cite{DGHN1965}, we show that if the absolute value of the
constants $\alpha^{ij}_v$ are smaller than a certain positive upper
bound, then the vector field $F$ has a unique equilibrium point $x^*$
in $\D$, see Theorem~\ref{thm33}. A direct application of
Theorem~\ref{thm11} in this case shows that the process
$X=\{X(n)\}_{n\ge 0}$ converges almost surely toward $x^*$.  We also
present a sharp result for the case in which the constants
$\{\alpha^{ij}_v:i\neq j\}$ are all equal to each other, see
Theorem~\ref{thm22}.

Apart from these examples, we carry out a complete study for the case
of two repelling random walks on the two-vertex graph. Depending on
the strength of the repulsion, we show that there may be multiple
hyperbolic equilibrium points, see Theorem~\ref{th:TwoRWonK2}. In our
last example, we describe some asymptotic properties of three
repelling random walks defined on $\Z$, in which the repulsion is
determined by the full previous history of the joint process, see
Theorem~\ref{th:3RW}. These processes are defined so that the
probability that a random walk makes a transition in one direction
decreases with the number of times that the other walks made a
transition in that direction.

\subsubsection{Weakly interacting random walks}\label{subsec:weakRW} 
The next two theorems show that if the interaction strength parameters
$\alpha^{ij}_v$ have absolute value less than some positive upper
bound, then the process $X$ converges almost surely to the unique
equilibrium point of the vector field $F$.

Let $\delta_{ij}$ denote Kronecker's delta, i.e., $\delta_{ij}=1$ if
$i=j$ and $\delta_{ij}=0$ if $i\neq j$.

\begin{theorem}\label{thm22} Let
$\alpha_{v}^{ij}=-\beta(1-\delta_{ij})$ for some $\beta>0$ and all
$v \in [d]$ and $i, j \in [m]$. Suppose that at least one
of the following conditions is satisfied:
\begin{align*}
  &d=2, \,\, m\ge 2\quad \textrm{and}\quad \beta \leq 2,&\tag{C1}\\
  & d\ge 2, \,\,m\ge 2 \quad\textrm{and}\quad \beta<\dfrac{4}{d
    (m-1)}.& \tag{C2} 
\end{align*}
Then the process $X = \{X(n)\}_{n\ge 0}$ converges almost surely to
$\big(\frac1d, \frac1d,\ldots,\frac1d\big)$.
\end{theorem}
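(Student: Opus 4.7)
The plan is to reduce Theorem~\ref{thm22} to showing that under either (C1) or (C2) the vector $x^*=(1/d,\dots,1/d)$ is the \emph{unique} equilibrium of $F$: once that is established, Theorem~\ref{thm11}(iii) applied with $F^{-1}(0)=\{x^*\}$ yields the a.s.\ convergence $X(n)\to x^*$. That $x^*$ is an equilibrium is a direct substitution in~\eqref{eqn:pi}: the exponents $\sum_{j=1}^m\alpha_v^{ij}(x^*)_v^j=-\beta(m-1)/d$ do not depend on $v$, so $\pi_v^i(x^*)=1/d=(x^*)_v^i$. I would split the uniqueness argument according to the regime, using strict convexity of $L$ under (C2) and a direct scalar reduction under (C1).

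For (C2), I plan to prove that $L$ is strictly convex on $\D$. Direct computation from~\eqref{eqn:Lyapunov} gives $\nabla^2 L(x)$ block-diagonal in $v$ with the $v$-th block equal to $H_v(x):=\mathrm{diag}(1/x_v^i)_{i}+\beta(J_m-I_m)$, where $J_m$ is the all-ones $m\times m$ matrix, so for $y\in\TsD$
\[
y^\top\nabla^2 L(x)\,y=\sum_{v,i}\frac{(y_v^i)^2}{x_v^i}-\beta\|y\|^2+\beta\sum_v\Bigl(\sum_i y_v^i\Bigr)^2.
\]
The key step is to bound the positive term: Cauchy--Schwarz with $\sum_v x_v^i=1$ gives $\sum_v(y_v^i)^2/x_v^i\geq\bigl(\sum_v|y_v^i|\bigr)^2$, and on the simplex tangent $\sum_v y_v^i=0$ the bound $\bigl(\sum_v|y_v^i|\bigr)^2\geq 2\sum_v(y_v^i)^2$ follows by splitting $y^i$ into positive and negative parts (which carry the same $\ell^1$ mass) and applying $\sum a_v^2\leq(\sum a_v)^2$ for $a_v\geq 0$. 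Summing over $i$ and discarding the non-negative last term yields $y^\top\nabla^2 L(x)y\geq(2-\beta)\|y\|^2$, which is strictly positive under (C2) since $\beta<4/(d(m-1))\leq 2$ for all $d,m\geq 2$. Hence $L$ is strictly convex on $\D$. Because $\partial L/\partial x_v^i=\log x_v^i+1+\beta(S_v-x_v^i)\to-\infty$ as $x_v^i\to 0^+$, the unique minimizer lies in $\D^\circ$, and the Lagrange-multiplier conditions for $\sum_v x_v^i=1$ at that interior critical point reproduce the relation $x_v^i=\pi_v^i(x)$. So the unique minimizer of $L$ is the unique equilibrium of $F$, which must therefore equal $x^*$.

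For (C1) at the endpoint $\beta=2$, strict convexity of $L$ fails (the Hessian at $x^*$ acquires a kernel antisymmetric across walks), so I would argue via a scalar reduction. Write $p_i:=x_1^i$ and $P:=\sum_j p_j$; a short computation turns $x=\pi(x)$ into $p_i=\phi(P-p_i)$ with $\phi(t):=\sigma\bigl(\beta((m-1)-2t)\bigr)$ and $\sigma$ the logistic function. Since $|\phi'(t)|=2\beta\sigma'\leq\beta/2\leq 1$ for $\beta\leq 2$, the function $g(p):=p-\phi(P-p)$ satisfies $g'(p)=1+\phi'(P-p)\geq 1-\beta/2\geq 0$, with equality only at the isolated $p$ at which $\sigma'$ attains its maximum $1/4$. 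Thus $g$ is strictly increasing and has at most one zero, so all $p_i$ at an equilibrium are equal; setting $p_i=p$ and $P=mp$ reduces the problem to $p=\sigma\bigl(\beta(m-1)(1-2p)\bigr)$, whose right-hand side is strictly decreasing in $p$, forcing $p=1/2$ and hence $x=x^*$. The main difficulty of the proof lies precisely here at the endpoint $\beta=2$, where $\nabla^2 L(x^*)$ is merely positive semidefinite and the convex-analytic route used for (C2) breaks down; the scalar monotonicity argument is what rescues this case.
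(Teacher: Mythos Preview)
Your argument is correct, and for condition (C1) it is essentially the paper's own scalar reduction: the paper also shows that all $x_v^i$ coincide across $i$ by proving that the map $t\mapsto t+\psi(t)$ (equivalently your $g(p)=p-\phi(P-p)$ for fixed $P$) is strictly increasing when $\beta\le 2$, and then concludes $x_v^i=\tfrac12$ by a monotonicity/contradiction step equivalent to your ``right-hand side is strictly decreasing'' observation.

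For condition (C2), however, your route is genuinely different. The paper does not analyse the Hessian of $L$; instead it observes that (C2) implies condition (C3), namely $\sum_{u}\sum_{j\neq i}|\alpha_u^{ij}|=d(m-1)\beta<4$, and then invokes Theorem~\ref{thm33}, whose proof shows that the Jacobian $\JacF(x)$ is strictly row diagonally dominant and applies the Gale--Nikaid\^o global injectivity theorem to conclude that $F$ has a unique zero. Your approach bypasses this external univalence result entirely by exploiting the Lyapunov function already built in Theorem~\ref{th:Lyapunov}: the Cauchy--Schwarz\,/\,mean-zero bound $\sum_v (y_v^i)^2/x_v^i\ge 2\sum_v(y_v^i)^2$ gives strict convexity of $L$ on $\iD$ along $\TsD$-directions for every $\beta<2$, and the identification of interior constrained critical points of $L$ with equilibria of $F$ then yields uniqueness. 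This is more self-contained and, incidentally, sharper in range: your Hessian estimate only needs $\beta<2$ for arbitrary $d,m\ge 2$, which strictly contains the region $\beta<4/(d(m-1))$ required by the diagonal-dominance argument. What the paper's route buys in exchange is generality in the interaction constants: the Gale--Nikaid\^o step works under the heterogeneous condition (C3) of Theorem~\ref{thm33}, whereas your convexity computation uses the specific symmetric form $\alpha_v^{ij}=-\beta(1-\delta_{ij})$.
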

In Theorem \ref{thm22}, the case $d=2$ can be reduced to an
one-variable problem. The case $d\ge 2, m\ge 2$ is a particular case
of the following general result.

\begin{theorem}\label{thm33} Suppose that $\alpha_v^{ij},$\, $v \in
[d]$ and $i, j \in [m]$, are real numbers satisfying
\eqref{eqn:strengths} and the following
condition holds
\begin{equation}
  \alpha_{v}^{ii}=0 \quad\textrm{and} \quad \sum_{u=1}^d
    \sum_{\substack{j=1\\ j\ne i}}^m  \left| \alpha_u^{ij} \right\vert
    < 4\quad\textrm{for each}\quad (v,i)\in [d]\times[m]
  \tag{C3}
\end{equation}
Then the process $X=\{X(n)\}_{n\ge 0}$ converges almost surely to the
unique equilibrium point of $F$.
\end{theorem}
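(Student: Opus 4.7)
The plan is to deduce Theorem \ref{thm33} from Theorem \ref{thm11}(iii) by showing that, under condition (C3), the vector field $F$ has a unique equilibrium. Existence of at least one equilibrium is immediate: $\pi : \mathfrak{D} \to \mathfrak{D}$ is continuous on a compact convex set, so Brouwer's theorem provides some $x^* = \pi(x^*)$, i.e.\ $F(x^*) = 0$. If uniqueness holds, then $F^{-1}(0) = \{x^*\}$ consists of an isolated point, and Theorem \ref{thm11}(iii) delivers $\mathbb{P}\bigl(\lim_n X(n) = x^*\bigr) = 1$.

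For uniqueness, I would apply the global injectivity theorem of \cite{DGHN1965} to the map $\Phi(x) = x - \pi(x)$, whose zeros coincide with the equilibria of $F$. Since $\mathfrak{D}$ is a product of $(d-1)$-dimensional simplices rather than a rectangular region, the first step is to parametrize each simplex factor by dropping the last coordinate, yielding a $C^1$ map $\tilde\Phi$ on a rectangular-type region of $\mathbb{R}^{m(d-1)}$. The global univalence hypothesis then amounts to verifying that the Jacobian of $\tilde\Phi$ is a $P$-matrix (all principal minors strictly positive) throughout the parametrized domain. Differentiating the softmax expression gives
\[
\frac{\partial \pi^i_v}{\partial x^k_w} = \pi^i_v\bigl(\alpha_v^{ik}\delta_{vw} - \pi^i_w \alpha_w^{ik}\bigr),
\]
and the hypothesis $\alpha_v^{ii} = 0$ makes every $k = i$ derivative vanish. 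Consequently the diagonal $(d-1)\times(d-1)$ blocks of the Jacobian of $\tilde\Phi$ are just the identity, while off-diagonal blocks involve only cross-walk couplings with $j \neq i$.

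The crux is then a diagonal-dominance estimate: summing the absolute values of the off-diagonal entries across the $i$-th block of rows produces, up to change-of-coordinate corrections, a bound of the form $\sum_u\sum_{j\neq i}|\alpha_u^{ij}|$ from the $\alpha_v^{ij}\delta_{vw}$ term plus a comparable contribution from the $\pi^i_w\alpha_w^{ij}$ term; the bound $4$ in condition (C3) is precisely calibrated to keep the combined row-sum strictly below the diagonal value $1$, forcing $\tilde\Phi$ to have a $P$-matrix Jacobian everywhere. The main obstacle I anticipate is the bookkeeping in this last step: eliminating the redundant coordinate on each simplex introduces cross terms into the Jacobian that must be carefully absorbed into the estimate, and one needs the symmetry $\alpha_v^{ij} = \alpha_v^{ji}$ together with the block structure to make the $P$-matrix condition hold uniformly for $x \in \mathfrak{D}$. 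Once global injectivity of $\tilde\Phi$ is secured, uniqueness of the equilibrium follows and Theorem \ref{thm11}(iii) closes the argument.
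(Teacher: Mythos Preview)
Your overall architecture matches the paper's: Brouwer for existence of a fixed point of $\pi$, Gale--Nikaid\^o for injectivity of $x\mapsto x-\pi(x)$ (hence uniqueness of the equilibrium), and then Theorem~\ref{thm11}(iii) to conclude. The substantive difference is that you plan to first eliminate the redundant simplex coordinate on each factor and verify a $P$-matrix condition for the reduced Jacobian on $\mathbb{R}^{m(d-1)}$. The paper instead applies Gale--Nikaid\^o directly to $F$ as a map from the convex set $\D\subset\mathbb{R}^{md}$ into $\mathbb{R}^{md}$, using the positive quasi-definite criterion rather than the $P$-matrix one. This sidesteps precisely the bookkeeping you flag as the main obstacle: with $\alpha_v^{ii}=0$ the full $md\times md$ Jacobian of $F$ has every diagonal entry equal to $-1$, and the off-diagonal entries are bounded by $\tfrac14|\alpha_u^{ij}|$ via the elementary inequalities $\pi_v^i(1-\pi_v^i)\le\tfrac14$ and $\pi_v^i\pi_u^i\le\tfrac14$. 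Condition~(C3) then reads exactly as strict row diagonal dominance of $-\JacF(x)$, and the paper invokes the symmetry $\alpha_v^{ij}=\alpha_v^{ji}$ to pass to positive definiteness and conclude injectivity. Your reduction could be made to work, but you would be introducing cross terms through the change of variables only to bound them away again; the paper's route makes transparent that the constant $4$ in (C3) is just four times the universal softmax bound $\tfrac14$, with no parametrization of the simplex required.
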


\subsubsection{An example of planar dynamics: two repelling walks on
  the two-vertex graph}\label{subsec:TwoRMonK2}
The following example considers a relatively simple model consisting
of two exponentially repelling walks, $W^1$ and $W^2$, on the
two-vertex complete graph. These two processes are defined according
to (\ref{eqn:trans_prob}) by setting for all $v
\in [d] = \{1, 2\}$ and $i, j \in [m]=\{1, 2\}$,
\[
  \alpha_v^{ij}
  =
  \begin{cases}
  -\beta, &\text{if }\ i \neq j,\\
  0,      &\text{if }\ i = j,
  \end{cases}
\]
with $\beta \geq 0$. In this case, for $x = (x_1^1$, $x_2^1$, $x_1^2$,
$x_2^2) \in \D$, the coordinate functions of the vector field
$F=(F_1^1$, $F_2^1$, $F_1^2$, $F_2^2)$ defined by (\ref{vectorfieldF})
are explicitly given by
\begin{equation}
  \label{eqn:ODEexplicit}
   F_v^i(x)
   = 
   -x^i_v + \frac{e^{-\beta x^j_v}}{e^{-\beta x^j_1} 
     + e^{-\beta x^j_2}},
  \qquad
  v \in [d], \quad i \in [m], \quad j = 3-i.
\end{equation}

The following theorem provides a complete description for the
asymptotic behaviour for the occupation measure process
$X=\{X(n)\}_{n\geq 0}$, depending on the strength of the repulsion
between the walks $W^1$ and $W^2$.

\begin{theorem}\label{th:TwoRWonK2} If $\beta \in[0, 2)$, then the
vertex occupation measure process $X = \{X(n)\}_{n\ge 0}$ converges
almost surely toward the point $\big(\frac12, \frac12, \frac12,
\frac12\big)$.  If $\beta > 2$, the vertex occupation measure process
$X=\{X(n)\}_{n\ge 0}$ converges almost surely to
\[
  (a, 1-a, 1-a, a) \quad\text{or}\quad (1-a,a,a,1-a),
\]
where $a \in \big(0,\frac12\big)$ is uniquely determined by $\beta$.
\end{theorem}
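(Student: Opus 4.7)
The plan is to exploit the constraints $x_1^i + x_2^i = 1$ ($i = 1, 2$) to reduce the four-dimensional ODE \eqref{eqn:ODEexplicit} on $\D = \triangle^2$ to a planar flow, and then invoke the general Theorems \ref{thm11} and \ref{thm22}. Concretely, setting $a = 2x_1^1 - 1$ and $b = 2x_1^2 - 1$ in $[-1,1]$ and using the identity $\frac{2}{1 + e^{\beta t}} - 1 = -\tanh(\beta t / 2)$, the system \eqref{eqn:ODEexplicit} collapses to
\[
\dot a = -a - f(b), \qquad \dot b = -b - f(a), \qquad \text{with } f(t) := \tanh(\beta t / 2).
\]

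The crux is classifying the equilibria. At an equilibrium $a = -f(b)$ and $b = -f(a)$, so by oddness of $f$, $a = f(f(a))$. Because $f$ is strictly increasing, every fixed point of $f \circ f$ is a fixed point of $f$ (if $f(a) > a$ then $f(f(a)) > f(a) > a$, contradicting $f(f(a)) = a$; similarly for $f(a) < a$). Thus $b = -a$ and $a$ solves $a = \tanh(\beta a/2)$. A standard analysis of $g(a) = \tanh(\beta a/2) - a$ using $g'(0) = \beta/2 - 1$ and the strict concavity of $\tanh$ on $(0,\infty)$ then yields: for $\beta \leq 2$, the unique equilibrium is $(0,0)$; for $\beta > 2$, there are exactly three equilibria, $(0,0)$ and $(\pm a^*, \mp a^*)$, with $a^* \in (0,1)$ the unique positive solution of $a = \tanh(\beta a/2)$.

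Linearising, the Jacobian at an equilibrium is $\bigl(\begin{smallmatrix}-1 & -f'(b) \\ -f'(a) & -1\end{smallmatrix}\bigr)$, with eigenvalues $-1 \pm \sqrt{f'(a)f'(b)}$. At $(0,0)$ these are $-1 \pm \beta/2$: linearly stable for $\beta < 2$, linearly unstable for $\beta > 2$. At $(\pm a^*, \mp a^*)$, evenness of $f'$ gives eigenvalues $-1 \pm f'(a^*)$, and the concavity argument for $g$ forces $g'(a^*) < 0$, hence $0 < f'(a^*) < 1$, so both asymmetric equilibria are linearly stable. In particular all equilibria are hyperbolic and isolated.

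To conclude, for $\beta \in [0, 2)$, Theorem \ref{thm22} with condition (C1) applied to $d = m = 2$ gives $X(n) \to (\tfrac12, \tfrac12, \tfrac12, \tfrac12)$ almost surely. For $\beta > 2$, Theorem \ref{thm11}(ii) rules out convergence to the unstable central equilibrium, and Theorem \ref{thm11}(iii) forces the total probability of converging to an equilibrium to be one; hence $X(n)$ converges a.s.\ to one of the two asymmetric equilibria, which in $\D$-coordinates are $(1-\alpha, \alpha, \alpha, 1-\alpha)$ and $(\alpha, 1-\alpha, 1-\alpha, \alpha)$ with $\alpha := (1 - a^*)/2 \in (0, \tfrac12)$. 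The only real obstacle throughout is ruling out equilibria with $b \neq -a$, which is exactly what the monotonicity argument in the second paragraph achieves.
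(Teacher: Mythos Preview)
Your proof is correct and follows the same overall strategy as the paper: classify the equilibria of $F$, determine their linear stability, and then invoke Theorem~\ref{thm11}. The paper's own proof is extremely brief---it cites Lemma~\ref{lem:TwoElephants_Equil} (which in turn defers to \cite{CPR20}) for the entire equilibrium and stability analysis, and then applies Theorem~\ref{thm11}(iii). You instead carry out that analysis from scratch via the planar reduction $(a,b)=(2x_1^1-1,\,2x_1^2-1)$, which makes the argument self-contained; the monotonicity observation that every fixed point of $f\circ f$ is already a fixed point of $f$ is precisely what forces $b=-a$ and collapses the problem to the single scalar equation $a=\tanh(\beta a/2)$. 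You also explicitly invoke Theorem~\ref{thm11}(ii) to exclude the unstable central equilibrium when $\beta>2$, a step the paper's two-line proof leaves implicit. One minor remark: your eigenvalues are computed for the reduced $2\times 2$ Jacobian, whereas the paper's notion of linear stability refers to the full $4\times 4$ Jacobian of $F$; these agree because the two additional eigenvalues (coming from the directions normal to $\D$, i.e.\ the constraints $x_1^i+x_2^i=1$) are both equal to $-1$, so the stability conclusions are unchanged.
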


\subsubsection{Three repelling random walks on $\Z$}\label{subsec:3RW}
Our next example considers the dynamics defined by three repelling
random walks on the two-vertex graph in order to study the asymptotic
behaviour of three random walks defined on $\Z$, reinforced to repell
each other according to the model described in the Introduction.

Let $\{S^i_n$; $i=1,2,3\}_{n\geq 0}$ be the process described by three
random walks on $\Z$ defined as follows. Assume that, for all $i \in
\{1, 2, 3\}$, $S_0^i$ are fixed.  Let $\mathcal A_0$ be the trivial
$\sigma$-algebra and for $n \geq 1$, let $\mathcal A_n = \sigma(\{S_k^1,
S_k^2, S_k^3 : 1 \leq k \leq n\})$ be the natural filtration generated
by these three processes. For $n=0$, the transition probability for
each random walk is set to $\Prb\big(S^i_{1}=S_0^i + 1\mid \mathcal
A_0\big) = \frac{1}{2}$. For $n\geq 1$, the transition probability is
defined as
\begin{equation}\label{eqn:transProb}
\begin{aligned}
\Prb\big(S_{n+1}^i = S_n^i + 1 \,  \big{|}  \,  \mathcal A_n \big)
 &=
   \mu\Big((S_{n}^j - S_{0}^j)/n + (S_{n}^k - S_{0}^k)/n\Big) \\
 &=
1 - \Prb\big(S_{n+1}^i = S_n^i - 1 \, \big{|} \, \mathcal A_n \big),
\end{aligned}
\end{equation}
where $\{i,j,k\} = \{1,2,3\}$, and $\mu: \R \to [0,1]$
is given by the following decreasing function
\begin{equation}\label{eqn:psi}
  \mu(y) = \frac{1}{1 + \exp(\beta y)},
  \qquad \beta \geq 0.
\end{equation}

The following theorem shows that this model has a phase transition at
$\beta=2$. When $\beta < 2$, the three random walks behave
asymptotically as three independent symmetric simple random walks on
$\Z$. When $\beta > 2$, there are always two random walks such that
one of them diverges to $-\infty$ while the other one diverges to
$+\infty$. The third walk may behave asymptotically as a simple
symmetric walk.

\begin{theorem}\label{th:3RW} If $\beta < 2$, for any $i \in \{1, 2,
  3\}$, then, with probability one, it holds that 
\begin{equation}\label{eq:betame2}
  \lim_{n\to\infty} \Prb\Big(S_{n+1}^i - S_n^i = 1 \,\Big|\,  \mathcal
  A_n\Big) = \frac{1}{2}.
\end{equation}

For sufficiently large $\beta$, with positive probability, there are
two random walks $i$, $j$ such that
\begin{equation}\label{eq:betama2}
  \lim_{n\to\infty} S^i_n = -\lim_{n\to\infty} S^j_n = \infty,
\end{equation}
and for $S_n^k$, $k \notin \{i,j\}$, it holds that $S_n^k$ behaves
assymptotically as a simple symmetric random walk, that is,
(\ref{eq:betame2}) holds for $i = k$.
\end{theorem}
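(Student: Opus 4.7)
The plan is to reduce the problem to the $d=2$, $m=3$ instance of the general model with $\alpha_v^{ij}=-\beta(1-\delta_{ij})$, and then apply Theorems~\ref{thm22} and~\ref{thm11}. For each $i\in\{1,2,3\}$ define $W^i(k)=1$ if $S_k^i-S_{k-1}^i=+1$ and $W^i(k)=2$ otherwise, and let $X(n)\in\D=\triangle^3$ be the corresponding occupation-measure process. If $R_n^i$ denotes the number of $+1$-steps of $S^i$ up to time $n$, then $S_n^i-S_0^i=2R_n^i-n$, so
\[
  (S_n^j-S_0^j)/n+(S_n^k-S_0^k)/n=2\bigl(X_1^j(n)+X_1^k(n)\bigr)-2+O(1/n),
\]
and a direct evaluation of \eqref{eqn:pi} with these $\alpha_v^{ij}$ gives
\[
  \pi_1^i(x)=\frac{1}{1+\exp\bigl(2\beta(x_1^j+x_1^k-1)\bigr)}, \qquad \{i,j,k\}=\{1,2,3\}.
\]
Hence the right-hand side of \eqref{eqn:transProb} coincides with $\pi_1^i(X(n))$ modulo an $O(1/n)$ drift perturbation, which is absorbed into the martingale-difference term of the Robbins--Monro recursion defining $X$. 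The conclusions of Theorems~\ref{thm22} and~\ref{thm11} therefore apply unchanged to the present $X$.

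For $\beta<2$, condition (C1) of Theorem~\ref{thm22} gives $X(n)\to(\tfrac12,\ldots,\tfrac12)$ almost surely, and continuity of $\pi_1^i$ combined with the $O(1/n)$ estimate above proves \eqref{eq:betame2}.

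For $\beta$ large I would search for asymmetric equilibria of $F$. Parameterising $\D$ by $p_i:=x_1^i$, equilibria satisfy $p_i=1/\bigl(1+\exp(2\beta(p_j+p_k-1))\bigr)$ for $\{i,j,k\}=\{1,2,3\}$. The ansatz $(p_1,p_2,p_3)=(\tfrac12,a,1-a)$ reduces this system to the single equation $a=1/\bigl(1+\exp(\beta(1-2a))\bigr)$; since the right-hand side is positive at $a=0$, equals $\tfrac12$ at $a=\tfrac12$, and has slope $\beta/2>1$ there, a short intermediate-value argument produces a root $a(\beta)\in(0,\tfrac12)$ for every $\beta>2$, and $a(\beta)\sim e^{-\beta}$ as $\beta\to\infty$.

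The main technical step is verifying linear stability of this equilibrium for large $\beta$. Working in the reduced $p$-coordinates, the Jacobian of $\tilde F(p)=-p+\pi_1(p)$ at $(\tfrac12,a,1-a)$ is
\[
  J=\begin{pmatrix}-1 & -\tfrac{\beta}{2} & -\tfrac{\beta}{2}\\ -2\beta a(1-a) & -1 & -2\beta a(1-a)\\ -2\beta a(1-a) & -2\beta a(1-a) & -1\end{pmatrix}.
\]
Its invariance under swapping the last two coordinates yields a one-dimensional antisymmetric block with eigenvector $(0,1,-1)$ and eigenvalue $-1+2\beta a(1-a)$, together with a two-dimensional symmetric block of trace $-2-2\beta a(1-a)<0$ and determinant $1-2\beta a(1-a)(\beta-1)$. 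Because $a(\beta)\sim e^{-\beta}$ forces both $\beta a(1-a)$ and $\beta^2 a(1-a)$ to $0$ as $\beta\to\infty$, all three eigenvalues of $J$ have negative real parts for sufficiently large $\beta$. Theorem~\ref{thm11}(i) then gives $\Prb\bigl(X(n)\to(a,1-a,1-a,a,\tfrac12,\tfrac12)\bigr)>0$. On this event $(S_n^1-S_0^1)/n\to 2a-1<0$ and $(S_n^2-S_0^2)/n\to 1-2a>0$, so $S_n^1\to-\infty$ and $S_n^2\to+\infty$, while $(S_n^3-S_0^3)/n\to 0$ drives the $S^3$-transition probability to $\mu(0)=\tfrac12$; this proves \eqref{eq:betama2} and the final assertion. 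The two remaining choices of which walk plays the balanced role give the analogous statements with $\{i,j\}$ replaced by the other pairs.
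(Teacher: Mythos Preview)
Your argument follows the paper's: reduce to the $d=2$, $m=3$ instance of the general model, use Theorem~\ref{thm22} for $\beta<2$, and for large $\beta$ exhibit the asymmetric equilibrium $(\tfrac12,w_\beta,1-w_\beta)$ (up to permutation), verify its linear stability, and invoke Theorem~\ref{thm11}(i). The paper packages the stability step as Lemma~\ref{lem:eigenvalues}, working with the full $6\times6$ Jacobian and showing its characteristic polynomial tends to $(1+\lambda)^6$ as $\beta\to\infty$; your reduced $3\times3$ computation with the $(0,1,-1)$ symmetry splitting is tidier but equivalent.

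Two minor remarks. The $O(1/n)$ discrepancy you flag between $(S_n^j-S_0^j)/n$ and $2X_1^j(n)-1$ is real---the paper in fact asserts exact equality there, which is not quite correct---but it is a deterministic bias $c_n=O(1/n)$, not a martingale-difference contribution; since $\gamma_n c_n=O(1/n^2)$ is summable the stochastic-approximation conclusions are unaffected, so your claim stands even if the wording should be adjusted. Second, there is a harmless labeling slip: you compute the Jacobian at $(p_1,p_2,p_3)=(\tfrac12,a,1-a)$ but state the final conclusion for the permuted equilibrium $(a,1-a,\tfrac12)$; the permutation equivariance of $F$ bridges this without further work.
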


\section{Proof of Theorem~\ref{th:Lyapunov}: The Lyapunov function}
This section presents the proof of Theorem
\ref{th:Lyapunov}. Throughout let $\pi:\D\to \D$ be the smooth map
defined in \eqref{eqn:pi}, $F:\mathfrak{D}\to\TsD$ the smooth vector
field defined in (\ref{vectorfieldF}) and $L:\mathfrak{D}\to\mathbb{R}$
the continuous function defined in \eqref{eqn:Lyapunov}.  The
\textit{boundary} and the \textit{interior} of $\mathfrak{D}$ are
respectively, the sets
\[
  \partial\D = \Big\{\, x \in \D\ \Big|\ \prod_{i,v} x_v^i =
  0\, \Big\},
  \qquad
  \iD = \D{\setminus}\partial\D.
\]

\begin{lemma}\label{flow} There exists a uniquely defined
one-parameter family $\Phi=\{\phi_t\}_{t\ge 0}$ of self-maps of
$\mathfrak{D}$, called the semi-flow associated with  $F$, such that
the map $(t,x)\mapsto \phi_t(x)$ is smooth, and the following holds
for each $x_0\in\D:$ 
\begin{enumerate}[(i), nosep]
\item $\phi_0 (x_0 )=x_0$ and $\phi_t (x_0 )\in\iD$
  for all $t>0$, 
\item $\frac{d}{dt} \phi_t (x_0 )=F\big(\phi_t (x_0 )\big)$
  for all $t\ge 0$. 
\end{enumerate}
\end{lemma}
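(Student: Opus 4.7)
The plan is to appeal to the classical Picard--Lindelöf theorem together with a forward-invariance argument for the simplex product $\D$. The four items to check are: (a) local existence and uniqueness; (b) the vector field preserves the affine constraints defining $\D$; (c) $\D$ is forward-invariant, so local solutions extend to all $t \geq 0$; (d) interior entry is immediate, that is $\phi_t(x_0) \in \iD$ for $t>0$. Smoothness of $(t,x)\mapsto\phi_t(x)$ then follows from the smooth dependence on initial conditions.

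First I would observe that $F(x)=-x+\pi(x)$ is $C^\infty$ on an open neighbourhood of $\D$ in $\R^{dm}$, since $\pi$ is a composition of exponentials and rational functions whose denominators never vanish. Consequently $F$ is globally Lipschitz on the compact set $\D$, and the Picard--Lindelöf theorem yields, for each $x_0\in\D$, a unique maximal $C^1$ solution $t\mapsto\phi_t(x_0)$ of $\dot x=F(x)$ with $\phi_0(x_0)=x_0$, defined on some right half-interval.

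For (b), the identities $\sum_v \pi_v^i(x)=1=\sum_v x_v^i$ give $\sum_v F_v^i(x)=0$ for every $i$, so $F(x)\in\TsD$ and the affine constraints $\sum_v x_v^i=1$ are preserved along trajectories. For (c), at any boundary point $x\in\partial\D$ with $x_v^i=0$ we have
\[
F_v^i(x) = -x_v^i + \pi_v^i(x) = \pi_v^i(x) > 0,
\]
since $\pi_v^i$ is a ratio of strictly positive exponentials. Hence the vector field points strictly inward on the relative boundary of $\D$, which, combined with (b), implies forward invariance of $\D$ by a standard Nagumo-type argument. Compactness of $\D$ then rules out finite-time blow-up and the solution extends to all $t\ge 0$.

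For (d), I would apply the variation-of-constants formula to each scalar equation $\dot y_v^i=-y_v^i+\pi_v^i(\phi_s(x_0))$, which gives
\[
\phi_t(x_0)_v^i
  = e^{-t}(x_0)_v^i + \int_0^t e^{-(t-s)}\pi_v^i\bigl(\phi_s(x_0)\bigr)\,ds.
\]
Because $\pi_v^i$ is continuous and strictly positive on the compact set $\D$, there is a constant $c>0$ with $\pi_v^i\ge c$ on $\D$, hence $\phi_t(x_0)_v^i \ge c(1-e^{-t})>0$ for every $t>0$, which proves $\phi_t(x_0)\in\iD$. Finally, smoothness of $(t,x)\mapsto\phi_t(x)$ is inherited from the smoothness of $F$ by the classical theorem on smooth dependence on initial data. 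The main subtlety, as usual in this setting, is the simultaneous verification of (b)--(d); they are handled at once by the inward-pointing calculation, so I do not expect any serious obstacle beyond bookkeeping.
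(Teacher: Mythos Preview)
Your proof is correct and follows essentially the same approach as the paper: both argue that $F$ is smooth (hence locally Lipschitz), that $F$ points strictly into the interior at every boundary point of $\D$, and then invoke standard ODE results on compact forward-invariant sets to obtain the global semi-flow. Your treatment is in fact more complete than the paper's brief proof, which does not explicitly justify why $\phi_t(x_0)\in\iD$ for all $t>0$; your variation-of-constants estimate in part~(d) fills this gap cleanly.
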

\begin{proof}
Let $x_0\in\partial\mathfrak{D}$. By \eqref{eqn:pi},
$\pi\big(x_0\big)\in \iD$. By (\ref{vectorfieldF}), $F(x_0)$ is the
displacement vector from $x_0\in\partial{\mathfrak D}$ to
$\pi\big(x_0\big)\in\iD$. Hence, by the convexity of $\mathfrak{D}$,
we have that $F(x_0)$ points towards the interior of $\mathfrak{D}$,
i.e, $\mathfrak{D}$ is invariant by $F$.
Moreover, since $F$ is smooth, we have that $F$ is locally
Lipschitz. A widely known result of the theory of ordinary
differential equations (see \citep[Theorem 3.3]{Kha92}) now asserts
that every locally Lipschitz vector field defined on an invariant
compact set admits a uniquely defined semi-flow.
\end{proof}

For each $x\in \D$, let $\Gamma(x)$ be the $md\times md$ matrix  
\[
  \Gamma(x) = -I + \Pi(x),
\]
where $I$ denotes the $md\times md$ identity matrix and $\Pi(x)$ is
defined as 
\begin{equation}\label{trans}
 \Pi(x)
 =
 \begin{bmatrix}
    \Pi^1(x)   & \mathbf{0}  & \cdots   & \mathbf{0}    \\[.27em]
    \mathbf{0} & \Pi^2(x)    & \cdots   & \mathbf{0}    \\[.27em]
    \vdots     & \vdots      & \ddots   & \vdots        \\[.27em]
    \mathbf{0} & \mathbf{0}  & \cdots   & \Pi^m(x)
  \end{bmatrix},
\end{equation}
where $\mathbf 0$ is the $d\times d$ zero matrix and for each $i\in
[m]$, the $d\times d$ block matrices $\Pi^i(x)$ are given by
\begin{equation}
  \label{eqn:Pi_supi}
  \Pi^i(x)
  =
  \begin{bmatrix}
    \pi_1^i(x)  & \pi_2^i(x) & \cdots & \pi_d^i(x)\\[0.2em] 
    \pi_1^i(x)  & \pi_2^i(x) & \cdots & \pi_d^i(x)\\
    \vdots      & \vdots     &        & \vdots\\
    \pi_1^i(x)  & \pi_2^i(x) & \cdots & \pi_d^i(x)
 \end{bmatrix}.
\end{equation}

In the next lemma, $\pi(x)$ denotes the vector
$\pi(x)=\big(\pi_1^1(x)$, $\ldots$, $\pi_d^1(x)$, $\ldots$,
$\pi_1^m(x),\ldots$, $\pi_d^m(x)\big)$.
  
\begin{lemma}\label{la1} $\pi(x)\Gamma(x)=\mathbf{0}$ for all $x\in
 \D$. 
\end{lemma}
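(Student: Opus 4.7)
The plan is a direct matrix computation, splitting $\pi(x)\Gamma(x)=-\pi(x)+\pi(x)\Pi(x)$ and showing that $\pi(x)$ is a left fixed vector of the block-diagonal matrix $\Pi(x)$.

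First I would exploit the block-diagonal structure of $\Pi(x)$ displayed in (\ref{trans}). Writing the row vector $\pi(x)$ as the concatenation $\bigl(\pi^1(x),\pi^2(x),\ldots,\pi^m(x)\bigr)$ with $\pi^i(x)=(\pi_1^i(x),\ldots,\pi_d^i(x))$, the product $\pi(x)\Pi(x)$ decomposes into blocks: its $i$-th block (of length $d$) is exactly $\pi^i(x)\Pi^i(x)$, because the off-diagonal blocks of $\Pi(x)$ vanish. So it suffices to prove $\pi^i(x)\Pi^i(x)=\pi^i(x)$ for each $i\in[m]$.

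Next I would unwind the definition of $\Pi^i(x)$ from (\ref{eqn:Pi_supi}): every row of $\Pi^i(x)$ coincides with $\pi^i(x)$, so the $(u,v)$ entry of $\Pi^i(x)$ is $\pi_v^i(x)$. Therefore the $v$-th component of the row vector $\pi^i(x)\Pi^i(x)$ equals
\[
\sum_{u=1}^{d}\pi_u^i(x)\,\pi_v^i(x)=\pi_v^i(x)\sum_{u=1}^{d}\pi_u^i(x).
\]
The key observation is that $\pi^i(x)$ is a probability vector, which is immediate from the explicit expression (\ref{eqn:pi}) (the denominator is exactly the sum of the numerators over $u\in[d]$). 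Hence $\sum_{u=1}^{d}\pi_u^i(x)=1$, which yields $\pi^i(x)\Pi^i(x)=\pi^i(x)$, as required.

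Assembling the blocks gives $\pi(x)\Pi(x)=\pi(x)$, and consequently
\[
\pi(x)\Gamma(x)=\pi(x)\bigl(-I+\Pi(x)\bigr)=-\pi(x)+\pi(x)=\mathbf{0}.
\]
There is no real obstacle here; the lemma is essentially the statement that $\Pi^i(x)$ is a row-stochastic matrix whose rows are all equal to $\pi^i(x)$, so $\pi^i(x)$ is a left eigenvector with eigenvalue $1$. The only point worth being careful about is tracking the block structure so that no cross terms between different walks $i\neq j$ appear in the product.
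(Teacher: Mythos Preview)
Your proof is correct and follows essentially the same approach as the paper: both compute the $(v,i)$-entry of $\pi(x)\Gamma(x)$ and use that $\sum_{u}\pi_u^i(x)=1$ to see it vanishes. The only cosmetic difference is that you first split off the $-I$ term and then exploit the block-diagonal structure explicitly, whereas the paper writes out the single entry of $\pi(x)(-I+\Pi(x))$ in one line.
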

\begin{proof} The $v$,$i$-entry of $\pi(x)\Gamma(x)$ is
\[
  \pi_1^i \pi_v^i +\ldots+\pi_{v-1}^i
  \pi_v^i+\pi_v^i(\pi_v^i-1)+\pi_{v+1}^i\pi_v^i+\ldots+\pi_d^i
  \pi_v^i
  =
  \Big(\sum_{v}\pi_v^i\Big)\pi_v^i-\pi_v^i=\pi_v^i-\pi_v^i=0,
\]
where we have omitted  $x$ in $\pi_u^i(x)$ to save space.
\end{proof}

In the next lemma, $x(t)$ denotes the vector
$x(t) = \big(x_1^1(t), \ldots, x_d^1(t), \ldots , x_1^m(t), \ldots,
x_d^m(t)\big)$.

\begin{lemma}\label{la2} Given $x_0\in\D$, let $x(t)=\phi_t(x_0)$ for
all $t\ge 0$. Then
\begin{equation*}
  \frac{d}{dt}x(t) = x(t) \Gamma(x(t))\quad \textrm{for all}\quad t\ge
  0.
\end{equation*}
\end{lemma}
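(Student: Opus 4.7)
The plan is to observe that the lemma is equivalent to the identity $F(x) = x\Gamma(x)$ for all $x \in \D$, because Lemma~\ref{flow}(ii) already tells us that $\frac{d}{dt}x(t) = F(x(t))$. Since $F(x) = -x + \pi(x)$ by \eqref{vectorfieldF}, and $\Gamma(x) = -I + \Pi(x)$, the claim boils down to proving the algebraic identity
\[
  x\,\Pi(x) = \pi(x),
\]
viewed as an equality of row vectors in $\R^{md}$.

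Next I would verify this identity by a direct entry-wise computation exploiting the block-diagonal structure of $\Pi(x)$. Indexing coordinates by pairs $(i,v)\in[m]\times[d]$, the only nonzero entries of $\Pi(x)$ lying in block $(i,i)$ are given by \eqref{eqn:Pi_supi} as $\Pi(x)_{(i,v),(i,u)} = \pi_u^i(x)$. Consequently, for every $(i,u)\in[m]\times[d]$,
\[
  \big(x\,\Pi(x)\big)_u^i \;=\; \sum_{j,w} x_w^j\,\Pi(x)_{(j,w),(i,u)}
  \;=\; \sum_{w=1}^d x_w^i\,\pi_u^i(x)
  \;=\; \pi_u^i(x)\sum_{w=1}^d x_w^i.
\]
Because $x\in\D=\triangle^m$, the sum $\sum_{w}x_w^i$ equals $1$ for every $i\in[m]$, and the right-hand side collapses to $\pi_u^i(x)$. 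This yields $x\Pi(x) = \pi(x)$.

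Combining the two steps gives
\[
  \frac{d}{dt}x(t) \;=\; F\bigl(x(t)\bigr) \;=\; -x(t) + \pi\bigl(x(t)\bigr)
  \;=\; x(t)\bigl(-I + \Pi(x(t))\bigr) \;=\; x(t)\,\Gamma\bigl(x(t)\bigr),
\]
for every $t\ge 0$, which is the stated equality. I do not anticipate any real obstacle here: the only subtlety is bookkeeping with the double index $(i,v)$ and remembering to invoke the simplex constraint $\sum_v x_v^i = 1$. The observation $\pi(x)\Gamma(x) = \mathbf 0$ established in Lemma~\ref{la1} is not needed for this lemma; it will presumably play its role elsewhere (e.g., in showing that $\pi(x)$ is a left eigenvector of $\Gamma(x)$ associated with the eigenvalue $0$, useful for the Lyapunov computation).
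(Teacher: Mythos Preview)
Your proof is correct, and it is in fact more direct than the paper's own argument. Both proofs start from Lemma~\ref{flow}(ii), namely $\frac{d}{dt}x(t)=F(x(t))$, and both ultimately rely on the simplex constraint for $x\in\D$. Where they differ is in how the algebraic identity $F(x)=x\,\Gamma(x)$ is established. You compute $x\,\Pi(x)$ directly and use $\sum_w x_w^i=1$ to get $x\,\Pi(x)=\pi(x)$; the paper instead invokes Lemma~\ref{la1} to rewrite $x\,\Gamma(x)=\bigl(x-\pi(x)\bigr)\Gamma(x)$ and then expands the $(v,i)$-entry, using $\sum_u F_u^i(x)=0$ (i.e.\ $F(x)\in\TsD$) to kill the residual term. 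Your route avoids Lemma~\ref{la1} entirely, which is a small gain in economy here; the paper's detour is not wrong, just unnecessary for this particular lemma. Your closing remark that Lemma~\ref{la1} is not needed at this point is accurate.
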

\begin{proof} By the item $(\mathit{ii})$ of Lemma \ref{flow}, we have
that the $v$,$i$-entry of $\frac{d}{dt} x(t)$ is  
\begin{equation}\label{123a}
  \frac{d}{dt}x_v^i(t)
  =
  F_v^i\big(x(t)\big)=-x_v^i(t)+\pi_i^v\big(x(t)\big)\cdot 
\end{equation}
On the other hand, by Lemma \ref{la1}, we have the $v$,$i$-entry of
the vector
\[
  x(t)\Gamma\big(x(t)\big)
  =
  \Big(x(t)-\pi\big(x(t)\big)\Big)\Gamma\big(x(t)\big)
\]
is given by (the expressions $(t)$ and $x(t)$ were omitted to save
space)
\begin{align*}\label{123b}
  (x_1^i-\pi_1^i)\pi_v^i &+ \ldots + (x_{v-1}^i -
    \pi_{v-1}^i)\pi_v^i                           
  + (x_v^i - \pi_v^i)                             
    (\pi_v^i-1)                                   \\
  &+ (x_{v+1}^i-\pi_{v+1}^i)
    \pi_v^i+\ldots+(x_d^i-\pi_d^i) \pi_v^i        \\
  &=(x_v^i-\pi_v^i)(-1)+ \sum_{u}
    (x_u^i-\pi_u^i)\pi_v^i                        \\
  &=-x_v^i+\pi_v^i
    +\pi_v^i\sum_{u}F_u^i(x)=-x_v^i+\pi_v^i,
\end{align*}
where $\sum_{u}F_u^i(x)=0$ because $F(x)\in T\D$.

Putting it all together, we have that
\[
   \frac{d}{dt}x_{v}^i(t)
  =
  -x_v^i(t) + \pi_v^i\big(x(t)\big)
  =
  \left[x(t)\Gamma\big(x(t)\big)\right]_v^i,
\]
which concludes the proof.
\end{proof}

\begin{definition}\label{def:RelativeEntropy} Given two vectors of
probability measures $x, y \in \iD$, we write $x=(x^1,
\ldots, x^m)$ and $y = (y^1, \ldots, y^m)$, where $x^i = (x_1^i,
\ldots, x_d^i)$, $y^i = (y_1^i, \ldots, y_d^i)$, $i \in [m]$. The
relative entropy of $x$ with respect to $y$ is defined as
\[
  \Ent\Big(\frac{x}{y}\Big)
  =
  \sum_{i=1}^m \Ent\Big(\frac{x^i}{y^i}\Big),
  \quad \text{where}\quad 
  \Ent\Big(\frac{x^i}{y^i}\Big) = 
  \sum_{v} x_v^i \log\bigg(\frac{x_v^i}{y_v^i}\bigg)
\]
and $\log$ is the natural logarithm. 
\end{definition}

The following lemmas will be used in the proof of
Theorem~\ref{th:Lyapunov}.

\begin{lemma}\label{lem:Step1} Given $x_0\in\D$, let
$x(t)=\phi_t\big(x_0\big)$ for all $t> 0$. Then
\[
  \frac{d}{dt} L(x(t))\bigg|_{t=\tau}
  =
  \frac{d}{dt}
  \Ent\bigg(\frac{x(t)}{\pi\big(x(\tau)\big)}\bigg)\bigg|_{t=\tau}
  \quad \textrm{for all}\quad \tau>0.\]
\end{lemma}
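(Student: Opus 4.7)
The plan is to expand both sides of the claimed identity using the definitions, and reduce the equality to the observation that $\log \pi^i_v(x)$ differs from $\sum_j \alpha^{ij}_v x^j_v$ by a function that depends only on $i$ (the normalization), which vanishes against $\dot x(t)\in \TsD$. Throughout, Lemma~\ref{flow}(i) guarantees $x(\tau)\in\iD$ for $\tau>0$, so all logarithms are well defined, and Lemma~\ref{flow}(ii) gives $\dot x^i_v(\tau)=F^i_v(x(\tau))$. I will also repeatedly use $\sum_v \dot x^i_v(t)=0$ because $\dot x(t)\in \TsD$.

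First I would differentiate $L(x(t))$ directly. The entropy part contributes
\[
  \frac{d}{dt}\sum_{i,v} x^i_v(t)\log x^i_v(t)
  = \sum_{i,v}\dot x^i_v(t)\bigl(1+\log x^i_v(t)\bigr)
  = \sum_{i,v}\dot x^i_v(t)\log x^i_v(t),
\]
since the constant $1$ integrates to $0$ against $\dot x$. The quadratic part, after using the symmetry $\alpha^{ij}_v=\alpha^{ji}_v$ from \eqref{eqn:strengths}, contributes
\[
  -\frac{1}{2}\frac{d}{dt}\sum_{i,j,v}\alpha^{ij}_v x^i_v(t) x^j_v(t)
  = -\sum_{i,j,v}\alpha^{ij}_v \dot x^i_v(t) x^j_v(t).
\]

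Next I would differentiate the relative entropy. Since the reference measure $\pi(x(\tau))$ does not depend on $t$, at $t=\tau$ one obtains, again by $\sum_v\dot x^i_v=0$,
\[
  \frac{d}{dt}\Ent\Bigl(\tfrac{x(t)}{\pi(x(\tau))}\Bigr)\Big|_{t=\tau}
  = \sum_{i,v}\dot x^i_v(\tau)\log x^i_v(\tau)
  -\sum_{i,v}\dot x^i_v(\tau)\log \pi^i_v(x(\tau)).
\]
Comparing the two expansions, the claim reduces to the identity
\[
  \sum_{i,v}\dot x^i_v(\tau)\log \pi^i_v(x(\tau))
  = \sum_{i,j,v}\alpha^{ij}_v \dot x^i_v(\tau)\, x^j_v(\tau).
\]

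To close the argument I would read off from \eqref{eqn:pi} that
\[
  \log \pi^i_v(x) = \sum_{j=1}^m \alpha^{ij}_v x^j_v - \log Z^i(x),
  \qquad Z^i(x)=\sum_{u=1}^d \exp\Bigl(\sum_{j=1}^m \alpha^{ij}_u x^j_u\Bigr).
\]
Substituting on the left-hand side, the normalization term $\log Z^i(x(\tau))$ depends only on $i$, hence
\[
  \sum_{i,v}\dot x^i_v(\tau)\log Z^i(x(\tau))
  = \sum_{i}\log Z^i(x(\tau))\sum_v \dot x^i_v(\tau) = 0,
\]
proving the required identity. This step—recognising that the partition function drops out against tangent vectors to $\D$—is the only non-routine moment, and it is really just the standard fact that entropy-like functionals are well-adapted to softmax maps. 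No obstacle beyond bookkeeping remains.
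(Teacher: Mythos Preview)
Your proof is correct and follows essentially the same approach as the paper: both compute $\frac{d}{dt}L(x(t))$ and $\frac{d}{dt}\Ent(x(t)/\pi(x(\tau)))$ directly, use $\dot x(t)\in\TsD$ to drop the constant and the partition-function term $\log Z^i$, and invoke the symmetry $\alpha^{ij}_v=\alpha^{ji}_v$ to handle the quadratic part. The only difference is cosmetic---you reduce to a single identity before substituting the Gibbs form of $\pi$, while the paper evaluates the two sides in parallel and compares.
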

\begin{proof} 
By Lemma \ref{flow}, it follows that $x(t)\in \iD$ for all
$t>0$, hence $\textrm{Ent}\big(x(t)/\pi(x(\tau))\big)$ is well-defined
for all $t,\tau>0$. Computing the spatial derivatives of $L$ at
$x=(x_v^i)\in\iD$ and using \eqref{eqn:strengths} lead to
\[
  \frac{\partial L}{\partial x_v^i}(x) = \log(x_v^i) + 1 - \sum_j
  \alpha_v^{ij} x_v^j.
\]
Hereafter, let $w=x(\tau)$ for some arbitrary but fixed
$\tau>0$. Computing the derivative of $t\mapsto L\big(x(t)\big)$ at
$\tau$ yields
\begin{align}
  \frac{d}{dt} L\big(x(t)\big)\bigg|_{t=\tau}
  &= 
  \sum_{i, v} \frac{\partial L}{\partial
    x_v^i}(w) \frac{d}{dt} x_v^i(t)\bigg|_{t=\tau}  \notag\\
  &=
  \sum_{i, v} \bigg(\log(w_v^i) + 1 - \sum_j \alpha_v^{ij} w_v^j
    \bigg)\frac{d}{dt} x_v^i(t)\bigg|_{t=\tau} \notag\\
  &=
  \sum_{i, v} \bigg(\log(w_v^i) - \sum_j \alpha_v^{ij} w_v^j \bigg)
    \frac{d}{dt} x_v^i(t)\bigg|_{t=\tau} \notag\\
    \label{eqn:Deriv_Lyapunov}
  &=
    \sum_{i,v}\log(w_v^i) \frac{d}{dt} x_v^i(t)\bigg|_{t=\tau} - 
    \sum_{i,j,v} \alpha_v^{ij} w_v^j \frac{d}{dt}
    x_v^i(t)\bigg|_{t=\tau},
\end{align}
where the third equality above holds because, since $F\big(w\big)\in
T\D$, we have that
\begin{equation}
  \label{eqn:sumField}
  \sum_{i,v} \frac{d}{dt} x_v^i(t)\bigg|_{t=\tau} = \sum_{i,v}
  F_v^i(w) = 0. 
\end{equation}

On the other hand, the derivative of the entropy between $x(t)$ and
$\pi(w)$ gives
\begin{equation}
 \label{eqn:Deriv_Entropy}
   \frac{d}{dt}
   \Ent\bigg(\frac{x(t)}{\pi(x(\tau))}\bigg) \bigg|_{t=\tau}
   = 
  \frac{d}{dt}\sum_{i, v} x_v^i(t)\log x_v^i(t)\bigg|_{t=\tau}
 -\frac{d}{dt}\sum_{i, v} x_v^i(t)\log\pi^i_v(w)\bigg|_{t=\tau}. 
\end{equation}

Using (\ref{eqn:sumField}), the first term at the right-hand side of
(\ref{eqn:Deriv_Entropy}) equals
\begin{equation} \label{eqn:Deriv_Lyapunov-1oterm}
 \frac{d}{dt}\sum_{i, v} x_v^i(t)\log x_v^i(t)\bigg|_{t=\tau}
 =
 \sum_{i,v} \log(w_v^i)\frac{d}{dt} x_v^i(t)\bigg|_{t=\tau}.
\end{equation}

To analyse the second term at the right-hand side of
(\ref{eqn:Deriv_Entropy}), note that $\pi^i_v(w)$ can be written as
\begin{equation}\label{20201}
   \pi^i_v(w) = e^{\sum_j \alpha^{ij}_v w_v^j}\big/Z_i(w),
\end{equation}
where $Z_i(w)$ is the normalising factor, that is, 
\[
  Z_i(w) = \sum_u e^{\sum_j \alpha^{ij}_u w_u^j}.
\]
By \eqref{20201}, the second term at the right-hand side of
(\ref{eqn:Deriv_Entropy}) becomes
\begin{align}
  -\frac{d}{dt}\sum_{i, v} x_v^i(t)\log\pi^i_v(w)\bigg|_{t=\tau}
  &=
  -\frac{d}{dt}\sum_{i,v}  x_v^i(t)
    \log\bigg(\frac{e^{\sum_j
       \alpha_v^{ij}w_v^j}}{Z_i(w)}\bigg)\bigg|_{t=\tau}
    \notag\\ 
  &=
  -\sum_{i,j,v} \alpha_v^{ij}w_v^j \frac{d}{dt} x_v^i(t)\bigg|_{t=\tau}
  +\sum_{i,v}\log(Z_i(w))\frac{d}{dt}x_v^i(t)\bigg|_{t=\tau} \notag\\
   \label{eqn:Deriv_Lyapunov-2oterm}
  &=
  -\sum_{i,j,v}\alpha_v^{ij}w_v^j \frac{d}{dt}
    x_v^i(t)\bigg|_{t=\tau}, 
\end{align}
where the last equality holds because $\frac{d}{dt}
x_v^i(t)\big|_{t=\tau}=F_v^i(w)$ and $F\big(w\big)\in T\D$.

Comparing (\ref{eqn:Deriv_Lyapunov}) with (\ref{eqn:Deriv_Entropy}),
(\ref{eqn:Deriv_Lyapunov-1oterm}), and
(\ref{eqn:Deriv_Lyapunov-2oterm}) concludes the proof.
\end{proof}

The following lemma is an adapted version of Lemma 3.1 in
\cite{BDFR015_I}. 

\begin{lemma}\label{lem:Step2}
Given $x_0\in\D$, let $x(t)=\phi_t\big(x_0\big)$ for all $t> 0$. Let
$x^i(t) = (x_1^i(t), \ldots, x_d^i(t))$ and $\pi^i(x(t)) =
(\pi_1^i(x(t)), \ldots, \pi_d^i(x(t)))$. The following inequality
holds
\begin{equation}
  \label{eqn:Lyap_via_Ent2}
  \frac{d}{dt}
  \Ent\bigg(\frac{x^i(t)}{\pi^i(x(\tau))}\bigg)\bigg|_{t=\tau} \leq 0, 
  \quad\forall
  \tau>0,\quad\forall i\in [m]. 
\end{equation}
Moreover,  
\begin{equation}\label{eqn:Lyap_via_Ent_iff}
  \exists \tau>0,\,\, \forall i\in [m],\,\,
  \frac{d}{dt}
  \Ent\bigg(\frac{x^i(t)}{\pi^i(x(\tau))}\bigg)\bigg|_{t=\tau} = 0 
   \quad
    \text{if and only if} \quad x_0\in F^{-1}(0).
\end{equation}
\end{lemma}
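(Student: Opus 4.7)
The plan is to differentiate the relative entropy directly and reduce the problem to a standard convexity inequality for Kullback--Leibler divergences. Fix $i\in[m]$, $\tau>0$, and write $w=x(\tau)\in\iD$ so that all the logarithms that appear are finite. Expanding
\[
\Ent\Big(\frac{x^i(t)}{\pi^i(w)}\Big)=\sum_v x_v^i(t)\log x_v^i(t)-\sum_v x_v^i(t)\log \pi_v^i(w)
\]
and differentiating at $t=\tau$, I would reuse the cancellation already used in Lemma \ref{lem:Step1}: the ``$+1$'' generated by $\tfrac{d}{dt}(x\log x)$ is killed by $\sum_v \dot x_v^i(\tau)=\sum_v F_v^i(w)=0$, since $F(w)\in\TsD$. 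This yields
\[
\frac{d}{dt}\Ent\Big(\frac{x^i(t)}{\pi^i(w)}\Big)\bigg|_{t=\tau}=\sum_v \log\frac{w_v^i}{\pi_v^i(w)}\,F_v^i(w).
\]

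Next I would substitute $F_v^i(w)=\pi_v^i(w)-w_v^i$ and rewrite the sum as
\[
-\sum_v\bigl(w_v^i-\pi_v^i(w)\bigr)\bigl(\log w_v^i-\log \pi_v^i(w)\bigr).
\]
Since $w^i$ and $\pi^i(w)$ are probability vectors on $[d]$, the identity $\sum_v(a_v-b_v)(\log a_v-\log b_v)=\Ent(a/b)+\Ent(b/a)\ge 0$ (with equality if and only if $a=b$), applied to $a=w^i$ and $b=\pi^i(w)$, immediately gives the inequality \eqref{eqn:Lyap_via_Ent2}. Moreover, for each fixed $i$, the derivative at $t=\tau$ vanishes exactly when $w^i=\pi^i(w)$, i.e., when the $i$-th block of $F(w)$ is zero.

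For the ``if and only if'' statement, note that all $m$ derivatives vanish at some $\tau>0$ precisely when $F(w)=0$, i.e., when $w=\phi_\tau(x_0)\in F^{-1}(0)$. Because $\pi(\D)\subset\iD$, every equilibrium of $F$ lies in $\iD$, so the constant curve $t\mapsto w$ is a trajectory of $\dot x=F(x)$. By uniqueness of solutions to this ODE (ensured by smoothness of $F$), $\phi_t(x_0)=w$ for every $t\in[0,\tau]$, and hence $x_0=w\in F^{-1}(0)$. The converse is immediate: if $x_0\in F^{-1}(0)$ then $x(t)\equiv x_0$, and every time derivative in sight vanishes.

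The only real obstacle is careful bookkeeping: one must verify that the ``$+1$'' term truly cancels via $F(w)\in\TsD$, and one must recognise the expression $\sum_v(a_v-b_v)(\log a_v-\log b_v)$ as the symmetrised KL divergence $\Ent(a/b)+\Ent(b/a)$, which delivers both the sign and the sharp equality case in one stroke. The forward direction of the characterisation uses the mild but essential observation that equilibria of $F$ cannot lie on $\partial\D$ (since $\pi$ takes values in $\iD$), so that the uniqueness argument is unobstructed by boundary issues.
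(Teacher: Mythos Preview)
Your argument is correct, and it is genuinely different from the paper's proof. The paper establishes the inequality by invoking the Kolmogorov-type representation $\dot x^i=x^i\Gamma^i(w)$ (Lemma~\ref{la2}) and then proving the nontrivial identity
\[
\frac{d}{dt}\Ent\Big(\frac{x^i(t)}{\pi^i(w)}\Big)\bigg|_{t=\tau}
= -\sum_{u\neq v}\ell\!\left(\frac{x_v^i(\tau)\pi_u^i(w)}{x_u^i(\tau)\pi_v^i(w)}\right)
x_u^i(\tau)\frac{\pi_v^i(w)}{\pi_u^i(w)}\Gamma^i_{vu}(w),
\qquad \ell(z)=z\log z-z+1,
\]
after which positivity of $\ell$ and of the off-diagonal entries of $\Gamma^i$ yield the sign and the equality case. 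That machinery is borrowed from \cite{BDFR015_I} and is robust enough to handle general rate matrices $\Gamma^i(x)$, not just the rank-one matrices arising here. By contrast, you exploit the specific form $F_v^i(w)=\pi_v^i(w)-w_v^i$ to collapse the derivative directly to $-\big(\Ent(w^i/\pi^i(w))+\Ent(\pi^i(w)/w^i)\big)$, the Jeffreys divergence, which gives both the inequality and its equality case in one line. Your route is more elementary and transparent for this particular vector field; the paper's route generalises beyond it. Your handling of \eqref{eqn:Lyap_via_Ent_iff}, including the backward-uniqueness step (legitimate since $F$ extends smoothly to a neighbourhood of $\D$ and the equilibrium $w$ lies in $\iD$), matches the paper's conclusion.
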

\begin{proof}
By Lemma \ref{flow}, $x(t)\in\iD$ for all $t>0$, thus 
$\textrm{Ent}\,\left(x^i(t)/\pi^i(x(\tau))\right)$ is
well-defined for all $t,\tau>0$. Hereafter, let $w = x(\tau) \in \iD$
for some arbitrary but fixed $\tau>0$. By Lemma \ref{la2} and
\eqref{trans},
\begin{equation}\label{kolmogorov-eq-i}
  \frac{d}{dt} x^i(t)\bigg|_{t=\tau} = x^i(\tau)\Gamma^i(w),
  \quad i\in [m], 
\end{equation}
where 
\begin{equation}\label{gi} \Gamma^i(w)=-I_d + \Pi^i(w), 
\end{equation}
$I_d$ is the $d\times d$ identity matrix and $\Pi^i(w)$ is the
matrix in (\ref{eqn:Pi_supi})  with $x$ replaced by $w$.

Let $\ell : (0,\infty) \to [0, \infty)$ be the continuous function
defined as $\ell(z) = z\log z - z + 1$. The inequality
(\ref{eqn:Lyap_via_Ent2}) will be shown by assuming, and proving
later, that
\begin{equation}
 \label{eqn:dEntequlell}
   \frac{d}{d t}\Ent\bigg(\frac{x^i(t)}{\pi^i(w)}\bigg)\bigg|_{t=\tau} 
  = 
  -\sum_{u, v: u\neq v} \ell\bigg(
  \frac{x_v^i(\tau)\pi_u^i(w)}{x_u^i(\tau)\pi_v^i(w)}
  \bigg)
  x_u^i(\tau) \frac{\pi_v^i(w)}{\pi_u^i(w)}\Gamma^i_{vu}(w),
\end{equation}
where $\Gamma^i_{vu}(w)$ is the $v$,$u$-entry of the matrix
$\Gamma^i(w)$ defined in \eqref{gi}.

Assuming (\ref{eqn:dEntequlell}), inequality (\ref{eqn:Lyap_via_Ent2})
is an immediate consequence of the following three facts:
\begin{enumerate}[$1.$]
\item $\pi_v^i(w) > 0$ for all $v \in [d]$,
\item $x_v^i(\tau) > 0$ for all $v\in [d]$, and
\item $\ell((0,\infty)) \subset [0,\infty)$.
\end{enumerate}
The first assertion follows from the definition of $\pi_v^i$. The
second holds because, $x(t)=\phi_t\big(x_0\big)\in\iD$ for
all $t>0$. The third is trivial.  Next we verify
(\ref{eqn:Lyap_via_Ent_iff}). If $x_0\in F^{-1}(0)$, then $x(t)=x_0$
and $F\big(x(t)\big)=0$ for all $t\ge 0$. In particular, $x^i(t) =
\pi^i(w)$, for all $t>0$, then $\frac{d}{dt} \Ent\big(x^i(t)/\pi^i(w)
\big)\big|_{t=\tau} = 0$, for all $t>0$, because the argument of the
function $\ell$ in (\ref{eqn:dEntequlell}) equals $1$ for all $u$,
$v$, and $\ell(1) = 0$. Converselly, assume that
$\frac{d}{dt}\Ent\big(x^i(t)/\pi^i(w) \big)\big|_{t=\tau} = 0$. By the
facts $a$) and $b$) above and also because $\Gamma^i_{vu}(w)>0$ for all
$u\neq v$, we have that $\sum_{u,v:u\neq v}\ell(z_{vu}^i)=0$, where
\[
  z_{vu}^i =\frac{x_v^i(\tau)\pi_u^i(w)}{x_u^i(\tau)\pi_v^i(w)}. 
\]
Now the fact c) implies that $\ell(z_{vu}^i)=0$ for all $v\neq
u$. This implies that $z_{vu}^i = 1$ for all for all $v, u \in V$. As
a consequence, $x_v^i(\tau)\pi_u^i(w) = x_u^i(\tau)\pi_v^i(w)$ for all
vertices $v$ and $u$. Summing both sides of the previous equality over
all $u \in [d]$ yields $x_v^i(\tau) = \pi_v^i(w)$ for all $v$, that is,
$x^i(\tau) = \pi^i(w)=\pi^i(x(\tau))$, for all $i\in [m]$. This
implies that  $x(\tau)=\pi(x(\tau))$, and hence that $F(x(\tau))=0$,
i.e., $x(\tau)$ is an equilibrium point of $F$. Hence,
$x(t)=x(\tau)=x_0$ for all $t>0$.

It remains to show that  (\ref{eqn:dEntequlell}) is true. First note
that since $\ell(z^i_{vv})=0$, we can replace $\sum_{u,v:u\neq v}$ by
$\sum_{u,v}$ in the right-hand side of
(\ref{eqn:dEntequlell}). Applying the definition of $\ell$ and
rearranging terms, the right-hand side of (\ref{eqn:dEntequlell})
equals 
\begin{equation}
\begin{aligned}
  \sum_{v, u \in V} \Bigg (x_v^i(\tau) &+ x_v^i(\tau) \log \Bigg (
  \frac{x_u^i(\tau)}{\pi_u^i(w)} \Bigg ) -   x_u^i(\tau)
  \frac{\pi_v^i(w)}{\pi_u^i(w)} \Bigg )  \Gamma^i_{vu}(w)   \label{dl1}  \\ 
  &
   - \sum_{v, u \in V}   x_v^i(\tau) \log \Bigg (
   \frac{x_v^i(\tau)}{\pi_v^i(w)} \Bigg )  \Gamma^i_{vu}(w). 
\end{aligned}
\end{equation}

Since for each $v\in V$, $\sum_{u \in V}\Gamma_{vu}^{i}(w) = 0$, the
second line of (\ref{dl1}) equals zero.  In addition, by Lemma
\ref{la1}, it follows that $\sum_{v \in V} \pi_v^i(w)\Gamma^i_{vu}(w)
= 0$ for each $u\in V$. Taking these two facts into account together
with (\ref{kolmogorov-eq-i}) shows that (\ref{dl1}) reduces to
\begin{align*}
  \sum_{u, v \in V} \Bigg[x_v^i(\tau) 
  \ +\ & 
  x_v^i(\tau) \log \Bigg (\frac{x_u^i(\tau)}{\pi_u^i(w)} \Bigg )\Bigg]
         \Gamma^i_{vu}(w)   \\ 
  &=
  \sum_{u \in V} \Bigg [ \frac{d}{d t} x_u^i(t)\bigg|_{t=\tau} +  \log 
    \Bigg( \frac{x_u^i(t)}{\pi_u^i(w)} \Bigg ) \frac{d}{d
    t} x_u^i(t)\bigg|_{t=\tau}\Bigg] \\ 
  &=
  \frac{d}{d t} \Bigg [1+ \sum_{u \in V}  x_u^i(t) \log 
    \Bigg ( \frac{x_u^i(t)}{\pi_u^i(w)} \Bigg )  \Bigg]\bigg|_{t=\tau}   
  =   
 \frac{d}{dt}\Ent\bigg(\frac{x^i(t)}{\pi^i(w)}\bigg)\bigg|_{t=\tau}.
 \qedhere
\end{align*}
\end{proof}

\begin{proof}[Proof of Theorem \ref{th:Lyapunov}] To prove that
$L$ is a strict Lyapunov function, we will show that $t \in [0,
\infty) \mapsto L\big(\phi_t(x_0)\big)$ is strictly decreasing. See
Definition \ref{slf}. First we show how to combine Definition
\ref{def:RelativeEntropy} with Lemmas~\ref{lem:Step1} and
\ref{lem:Step2} to prove the following claim:
\begin{equation}\label{101}
\frac{d}{dt} L\big(\phi_t(x_0)\big)\bigg|_{t=\tau} < 0,\quad\forall
x_0\in\D{\setminus} F^{-1}(0),\quad\forall \tau\in (0,\infty).
\end{equation}
In fact, let $x_0\in \D{\setminus}F^{-1}(0)$. By
Definition~\ref{def:RelativeEntropy} and Lemma~\ref{lem:Step1} we have
that
\[
  \frac{d}{dt} L\big(\phi_t(x_0)\big)\bigg|_{t=\tau}
  =
  \sum_{i=1}^m \frac{d}{dt}
  \Ent\bigg(\frac{\phi^i_t(x_0)}{
    \pi^i\big(\phi_\tau(x_0)\big)}\bigg)\bigg|_{t=\tau}
  \quad \text{for all }\ \tau > 0.
\]
Hence, using \eqref{eqn:Lyap_via_Ent_iff} gives
\[
  \frac{d}{dt} L\big(\phi_t(x_0)\big)\bigg|_{t=\tau} \neq 0, 
  \qquad \forall x_0 \in \D{\setminus}\Eq,
  \quad \forall \tau \in (0,\infty),
\]
and by \eqref{eqn:Lyap_via_Ent2} we have
\[
  \frac{d}{dt} L\big(\phi_t(x_0)\big)\bigg|_{t=\tau} \le 0
  \quad \forall \tau \in (0,\infty).
\]
These two assertions combined prove the claim.
  
Let $x_0\in \D{\setminus} F^{-1}(0)$. By (\ref{101}) and by the
continuity of $t\mapsto L\big(\phi_t(x_0)\big)$ at $0$, we have that
the function $t\in [0,\infty)\mapsto L\big(\phi_t(x_0)\big)$ is
strictly decreasing, showing that $L$ is a Lyapunov function for the
vector field $F$.
\end{proof}

\section{Proof of Theorem~\ref{thm11}: Stochastic approximations}
In this section, we show how the asymptotic behaviour of the process
of empirical vertex occupation measures $X=\{X(n)\}_{n\ge 0}$ defined
in (\ref{theX}) is related to the asymptotic behaviour of the ordinary
differential equation (\ref{eqn:ODE}) where $F$ is the vector field
defined in \eqref{vectorfieldF}. A formulation based on dynamical
systems theory that makes precise the connection between the process
$X=\{X(n)\}_{n\ge 0}$ and the semi-flow $\Phi = \{\phi_t\}_{t\ge 0}$ 
induced by the vector field $F$ has been developed in \cite{B96},
\cite{B99}. This connection will be established by
Lemma~\ref{lm:LimitSet} stated bellow.

For each $n\geq 1$ define
\begin{equation}
  \label{eqn:xi}
  \xi(n) = (\xi_v^i(n); 1 \leq i \leq m, 1 \leq v \leq d)
  \quad\text{where}\quad
  \xi_v^i(n) = \Ind\{W^i(n+1) = v\}.
\end{equation}

The following lemma allows to identify $X=\{X(n)\}_{n\ge 0}$ with a
specific process known as a stochastic approximation. This step is key
to the general approach followed throughout.

\begin{lemma}\label{lem:XSA}
The process $X=\{X(n)\}_{n\ge 0}$ satisfies the recursion
\begin{equation}
  \label{eqn:SA} 
  X(n+1) - X(n) = \gamma_n(F(X(n))+U_n),
\end{equation}
where
\begin{equation}
	\label{eqn:gamma_and_U}
  \gamma_n = \frac{1}{n+d+1},
  \qquad
  U_n = \xi(n) - \E[\xi(n)\mid\F_n],
\end{equation}
and $F$ is the vector field defined in \eqref{vectorfieldF}.
\end{lemma}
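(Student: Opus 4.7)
The plan is to derive the recursion directly from the definition of the empirical occupation measure in \eqref{eqn:occupation_measure}, then decompose the resulting increment into a drift (the vector field $F$) and a martingale difference (the noise $U_n$).

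First I would fix coordinates $(i,v)\in[m]\times[d]$ and compute the one‑step change of $X_v^i(n)$. From \eqref{eqn:occupation_measure}, the numerator at time $n+1$ differs from the numerator at time $n$ only by the indicator $\Ind\{W^i(n+1)=v\}=\xi_v^i(n)$, while the denominator goes from $d+n$ to $d+n+1$. A short algebraic manipulation then gives
\[
X_v^i(n+1)=\frac{(d+n)X_v^i(n)+\xi_v^i(n)}{d+n+1},
\]
whence
\[
X_v^i(n+1)-X_v^i(n)=\frac{\xi_v^i(n)-X_v^i(n)}{d+n+1}=\gamma_n\bigl(\xi_v^i(n)-X_v^i(n)\bigr),
\]
with $\gamma_n=1/(n+d+1)$ as in \eqref{eqn:gamma_and_U}.

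Next I would introduce the conditional mean of the noise. By \eqref{eqn:trans_prob} and the discussion following \eqref{eqn:pi},
\[
\E\bigl[\xi_v^i(n)\mid\F_n\bigr]=\Prb\bigl(W^i(n+1)=v\mid\F_n\bigr)=\pi_v^i\bigl(X(n)\bigr),
\]
since $X(n)$ is $\F_n$‑measurable. Writing
\[
\xi_v^i(n)-X_v^i(n)=\bigl(\xi_v^i(n)-\E[\xi_v^i(n)\mid\F_n]\bigr)+\bigl(\pi_v^i(X(n))-X_v^i(n)\bigr),
\]
the first bracket is the $(i,v)$ component of $U_n$ defined in \eqref{eqn:gamma_and_U}, and by the definition of $F$ in \eqref{vectorfieldF} the second bracket is exactly $F_v^i(X(n))$. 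Substituting into the previous display and collecting components across $(i,v)$ yields \eqref{eqn:SA}.

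There is no substantive obstacle here: the proof reduces to a one‑line recursion for $X_v^i(n)$ plus the identification $\E[\xi(n)\mid\F_n]=\pi(X(n))$. The only mild subtlety to make explicit is the $\F_n$‑measurability of $X(n)$, which ensures that $\pi(X(n))$ can be pulled out of the conditional expectation; this is immediate from the definition of the filtration and of $X(n)$. Once this is noted the result follows coordinate‑wise.
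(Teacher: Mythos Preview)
Your proposal is correct and follows essentially the same approach as the paper: compute the one-step increment of $X_v^i(n)$ from \eqref{eqn:occupation_measure}, add and subtract $\E[\xi(n)\mid\F_n]$, and identify this conditional expectation with $\pi(X(n))$ to recover $F(X(n))+U_n$. The only cosmetic difference is that the paper writes out the increment as a difference of two fractions before simplifying, whereas you first rewrite the numerator; the content is identical.
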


\begin{proof}
The increment of the occupation measure for the vertex $v \in V$ at
time $n+1$ by the $i$-th random walk is given by
\begin{align*}
  X^i_v(n+1) - X^i_v(n) 
  &= 
    \frac{1 + \sum_{k=0}^{n-1} \xi^i_v(k) + \xi^i_v(n)}{d+n+1} - 
    \frac{1 + \sum_{k=0}^{n-1} \xi^i_v(k)}{d+n} \\
  &= 
    \frac{1}{d+n+1} \bigg(-\frac{1 + \sum_{k=0}^{n-1}
    \xi^i_v(k)}{d+n} +  \xi^i_v(n) \bigg) \\
  &=
    \frac{1}{d+n+1}\Big(-X^i_v(n) + \xi^i_v(n)\Big).
\end{align*}
Setting $\gamma_n = (n+d+1)^{-1}$  and	using $\xi$ as defined in
(\ref{eqn:xi}) leads to
\begin{align*}
  X(n+1) - X(n)
  &=
    \gamma_n(-X(n) + \xi(n)) \\
  &=
    \gamma_n\Big\{\Big(-X(n)+ \E[\xi(n)\mid\F_n]\Big) +
    \Big(\xi(n)-\E[\xi(n)\mid\F_n]\Big)\Big\} \\
  &=
    \gamma_n\Big\{\Big(-X(n)+ \E[\xi(n)\mid\F_n]\Big) +
    U_n \Big\}.
\end{align*}
Now, according to (\ref{eqn:pi}),
\[
  \E[\xi(n)\mid\F_n]
  =
  \big(
  \Prb(W^i_{n+1} = v\mid\F_n); 1\leq i \leq m, 1\leq v\leq d 
  \big)
  =
  \pi(X(n)).
\]
Substituting this into the expression for the increments of $X$ gives
$X(n+1) - X(n) = \gamma_n\big\{\big(-X(n)+ \pi(X(n))\big) + U_n
\big\}$, which, by using the definition of $F$ in (\ref{vectorfieldF}), 
concludes the proof.
\end{proof}

The following two definitions are necessary to state
Lemma~\ref{lm:LimitSet}.

\begin{definition}[Chain-recurrent set]\label{def:ChainRec}
Let $\delta >0$, $T>0$. A $(\delta, T)$-pseudo orbit from $x \in \D$
to $y \in \D$ is a finite sequence of partial orbits $\{\phi_t(y_i) :
0 \leq t \leq t_i\}$; $i=0, \ldots, k-1$; $t_i\geq T$ of the semi-flow
$\Phi= \{\phi_t\}_{t\geq 0}$ such that
\[
  \|y_0 - x\| < \delta,
  \qquad
  \|\phi_{t_i}(y_i) -  y_{i+1}\| < \delta,\ \ i=0, \ldots, k-1,
  \quad\text{and}\quad
  y_k = y.
\]
A point $x \in \D$ is \emph{chain-recurrent} if for every $\delta>0$
and $T>0$ there is a $(\delta, T)$-pseudo orbit from $x$ to
itself. The set of chain-recurrent points of $\Phi$ is denoted by
$\CR$.
\end{definition}

It follows that $\CR$ is closed, positively invariant and such that
$\Eq \subset \CR$.

\begin{definition}\label{def:Limit_Set}
Let $\sL\big(\{X(n)\}\big)$ be the limit set of the
stochastic approximation process $\{ X(n)\}_{n\geq
0}$. That is, for any point $\omega \in
\Omega$, the value of $\sL\big(\{X(n)\}\big)$ at $\omega$ is given by the 
set of poins $x \in \R^{md}$ for which $\lim_{k\to
\infty} X(n_k, \omega) = x$, for some strictly increasing sequence of
integers $\{n_k\}_{k \in \N}$.
\end{definition}

\begin{lemma}\label{lm:LimitSet} 
Let $X = \{X(n)\}_{n\ge 0}$ be the occupation measure process
satisfying the recursion in \eqref{eqn:SA}. The following hold
\begin{enumerate}[(i), nosep]
\item $\{X(n)\}_{n\geq 0}$ is bounded, 
\item $\lim_{n\to\infty}\gamma_n=0$, $\sum_{n\geq 0}
    \gamma_n = \infty$, and $\sum_{n\geq 0} \gamma_n^2 <
    \infty$, 
\item\label{as:KushnerLemma} For each $T>0$, almost surely it holds
that
\[
  \lim_{n\to\infty}
  \Bigg(\sup_{\{\, r\, :\, 0\, \leq\, \tau_r - \tau_n\, \leq\, T\, \}}
  \Bigg\|\sum_{k=n}^{r-1} \gamma_k U_k\Bigg\|
  \Bigg) = 0,
\]
where $\tau_0=0$ and $\tau_n = \sum_{k=0}^{n-1} \gamma_k$.
\item The set $\mathfrak L\big(\{X(n)\}\big)$ is almost 
surely connected and included in $\CR$, the chain-recurrent set of the
semi-flow induced by the vector field $F$ in (\ref{vectorfieldF}). 
\end{enumerate}
\end{lemma}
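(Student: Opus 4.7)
The plan is to verify the four items in turn, with (i) and (ii) being essentially immediate, (iii) following from a standard $L^2$ martingale maximal inequality, and (iv) following by invoking the main theorem of Bena\"{i}m's dynamical system framework \cite{B99} once (i)--(iii) are established.

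For (i), I would simply note that $X(n) \in \mathfrak{D} = \triangle^m$ for every $n \ge 0$ (each coordinate $X^i_v(n)$ is a convex combination by construction; see \eqref{eqn:occupation_measure}), and $\mathfrak{D}$ is compact, hence $\{X(n)\}_{n\ge 0}$ is bounded. For (ii), with $\gamma_n = 1/(n+d+1)$ the three properties reduce to the elementary facts $1/(n+d+1) \to 0$, $\sum_n 1/(n+d+1) = \infty$ (harmonic divergence), and $\sum_n 1/(n+d+1)^2 < \infty$.

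For (iii), the key observation is that $\{U_n\}_{n\ge 0}$, as defined in \eqref{eqn:gamma_and_U}, is an $\F_{n+1}$-adapted martingale difference sequence with respect to $\F_n$, and it is uniformly bounded since $\xi(n) \in \{0,1\}^{dm}$ forces $\|U_n\| \le C$ for a constant $C$ depending only on $d$ and $m$. Hence $M_n := \sum_{k=0}^{n-1} \gamma_k U_k$ is an $L^2$-bounded martingale:
\begin{equation*}
\E\|M_n\|^2 = \sum_{k=0}^{n-1}\gamma_k^2\, \E\|U_k\|^2 \le C^2 \sum_{k\ge 0}\gamma_k^2 < \infty.
\end{equation*}
By Doob's martingale convergence theorem $M_n$ converges a.s.\ to a limit $M_\infty$, from which the uniform Cauchy control
\begin{equation*}
\sup_{r:\, 0 \le \tau_r - \tau_n \le T}\Bigl\|\sum_{k=n}^{r-1}\gamma_k U_k\Bigr\|
= \sup_{r:\, 0 \le \tau_r - \tau_n \le T}\|M_r - M_n\| \longrightarrow 0 \quad \text{a.s.}
\end{equation*}
follows immediately, even without the constraint $\tau_r - \tau_n \le T$.

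For (iv), once (i), (ii) and (iii) are in hand, the recursion \eqref{eqn:SA} together with the smoothness (and hence Lipschitzness on the compact set $\D$) of the vector field $F$ places us exactly in the setting of Bena\"{i}m's theory of asymptotic pseudo-trajectories \cite{B99}. Specifically, by \citep[Proposition 4.1 and Proposition 4.2]{B99} (together with the Kushner--Clark condition in item \ref{as:KushnerLemma} above), the piecewise-affine interpolation of $\{X(n)\}$ is almost surely an asymptotic pseudo-trajectory of the semi-flow $\Phi$; then \citep[Theorem 5.7]{B99} yields that its limit set $\sL(\{X(n)\})$ is almost surely internally chain-transitive for $\Phi$, hence in particular connected and contained in the chain-recurrent set $\CR$. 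The only step requiring care is checking the hypotheses of Bena\"{i}m's theorems: compactness of $\D$ and boundedness of $\{X(n)\}$ handle the domain assumptions, $F$ is smooth on $\D$ (hence Lipschitz there), and the step-size and noise conditions are precisely (ii) and (iii). I do not anticipate a genuine obstacle here---the argument is a bookkeeping exercise in matching hypotheses to an established theorem; the only subtlety is making sure the noise condition is stated in the form required by the reference used.
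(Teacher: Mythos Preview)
Your proposal is correct and follows essentially the same approach as the paper: (i) and (ii) are dismissed as immediate, (iii) is obtained by showing the partial sums $M_n=\sum_k \gamma_k U_k$ form an $L^2$-bounded martingale that converges a.s.\ (the paper routes this through Doob's decomposition and \cite[Theorem~5.4.9]{D2010}, while you use $L^2$-boundedness directly, but the content is the same), and (iv) is deduced from Bena\"{i}m's asymptotic pseudo-trajectory framework (the paper cites \cite{B96} and \cite{BH95} rather than \cite{B99}, but these are the same results).
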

\begin{proof}
The proof of $(\mathit{iv})$ follows from Theorem 1.2 in \cite{B96}
together with Corollary 1.2 to Theorem 1.1 in \cite{BH95}, and relies
on the items $(\mathit{i})$-$(\mathit{iii})$. The assertions in 
$(\mathit{i})$ and $(\mathit{ii})$ are immediate. We will prove
$(\mathit{iii})$. Let $M_n = \sum_{k=0}^n \gamma_k U_k$. The process
$\{M_n\}_{n\geq 0}$ is a martingale with respect to $\{\F_n, n\geq
0\}$, that is
\[  
 \E[M_{n+1}\mid\F_{n+1}] = \sum_{k=0}^n \gamma_k U_k +
   \gamma_{n+1}\E[U_{n+1}\mid\F_{n+1}] = M_n.
\]
Observe that
\begin{align*}      
  \E\big[\|M_{n+1} - M_n\|^2\big|\ \F_{n+1}\big] &= \gamma_{n+1}^2
\E\big[\|U_{n+1}\|^2\big|\ \F_{n+1}\big] \\ &\leq \gamma_{n+1}^2
\Big(\sum_{i,v} \xi^i_v(n+1)\Big)^2 \\ &\leq (md)^2 \gamma^2_{n+1}.
\end{align*}
Using Doob's decomposition for the sub-martingale $M_n^2$, let $\{A_n,
n\geq 1\}$ be a predictable increasing sequence defined by $A_{n+1} =
M_n^2 + M_n$ with $A_1=0$. The conditional variance formula for the
increment $M_{n+1} - M_n$ gives
\[  
  A_{n+2}-A_{n+1}
  =
  \E\big[M_{n+1}^2\big|\ \F_n\big] - M_n^2 =
  \E[\|M_{n+1}-M_n\|^2\mid  \F_{n+1}],
\]
and hence for any $n$,
\[
   A_{n+2}
  =
   \sum_{k=0}^n \E\big[\|M_{k+1}
   - M_k\|^2\big|\ \F_{n+1}\big]
  \leq
    (md)^2 \sum_{k=0}^n \gamma_{k+1}^2.
\]
Passing to the limit $n\to\infty$ shows that almost surely $A_\infty <
\infty$. According to Theorem~5.4.9 in \cite{D2010}, this in turn
implies that $M_n$ converges almost surely to a finite limit and hence
that $\{M_n\}_{n\geq 0}$ is a Cauchy sequence. This is sufficient to
conclude the proof.
\end{proof}

The proof of the second item of Theorem~\ref{thm11}, concerning the
non-convergence toward linearly unstable equilibria, makes use of
the following lemma. For $w \in \R$, let $w^+ = \max\{w,0\}$
and $w^- = \max\{-w, 0\}$.

\begin{lemma}\label{lem:boundfrombellow}
Let $x^*$ be a linearly unstable equilibrium of the vector field $F$
defined by (\ref{vectorfieldF}). There is a
neighborhood  $\Bhood(x^*)$ of $x^*$ and a constant $c > 0$  
\begin{equation}
  \label{eqn:NoiseboundBellow} %{eqn:NoiseboundBellow-new}
  \E\Big[\big\langle \theta, U_n \big\rangle^+ \, \Big| \, X(n) = x,
  \, \F_n \Big] 
  \geq c 
\end{equation}
for every $n > 0$, every $x \in \Bhood(x^*)$, and every $\theta \in
\TsD_1$.
\end{lemma}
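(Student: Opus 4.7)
The plan is to reduce the required lower bound to two ingredients: (a) a uniform \textit{lower} bound on the conditional variance of the scalar $\langle\theta,U_n\rangle$, and (b) a uniform \textit{upper} bound on $|\langle\theta,U_n\rangle|$. Since $\langle\theta,U_n\rangle$ is, by construction, a bounded centered random variable, the elementary inequalities
\[
  \E[Y^+\mid\F_n]=\tfrac12\,\E[|Y|\mid\F_n]\;\ge\;\frac{\E[Y^2\mid\F_n]}{2\|Y\|_\infty}
\]
(valid for any $Y$ with $\E[Y\mid\F_n]=0$ and $\|Y\|_\infty<\infty$) will deliver the desired constant $c$. So the actual work is (a).

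For (a) I would exploit the fact that, given $\F_n$, the walks $W^1(n+1),\ldots,W^m(n+1)$ are conditionally independent, so the categorical random vectors $\xi^1(n),\ldots,\xi^m(n)$ are conditionally independent with $\E[\xi^i(n)\mid\F_n]=\pi^i(X(n))$. This makes the conditional covariance of $\xi(n)$ block-diagonal, and hence, for $\theta=(\theta^1,\ldots,\theta^m)\in\TsD$,
\[
  \Var(\langle\theta,U_n\rangle\mid\F_n)\;=\;\sum_{i=1}^m\Var(\langle\theta^i,\xi^i(n)\rangle\mid\F_n).
\]
For each $i$, I would apply the standard identity $\Var_p(\theta^i)=\frac12\sum_{v,u}p_vp_u(\theta^i_v-\theta^i_u)^2$ with $p=\pi^i(X(n))$, which is symmetric in $(v,u)$ and hence insensitive to any additive constant in the $\theta^i_v$'s. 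This sets up cleanly the use of the tangent-space constraint $\sum_v\theta^i_v=0$ coming from $\theta\in\TsD_1$.

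The crucial estimate is then obtained in two moves. First, since $\pi^i_v$ is continuous on $\D$ and strictly positive (it is a softmax, bounded away from $0$ on any compact set avoiding $\partial\D$), I choose a neighbourhood $\Bhood(x^*)$ of $x^*$ small enough that $\pi^i_v(x)\ge p_0>0$ for all $x\in\Bhood(x^*)$, all $i,v$. This gives
\[
  \Var(\langle\theta^i,\xi^i(n)\rangle\mid X(n)=x)\;\ge\;\tfrac{p_0^2}{2}\sum_{v,u}(\theta^i_v-\theta^i_u)^2.
\]
Second, the tangent constraint $\sum_v\theta^i_v=0$ yields the algebraic identity $\sum_{v,u}(\theta^i_v-\theta^i_u)^2=2d\|\theta^i\|^2$, so summing over $i$ gives $\Var(\langle\theta,U_n\rangle\mid\F_n)\ge d\,p_0^2\|\theta\|^2=d\,p_0^2$ for $\theta\in\TsD_1$, uniformly in $x\in\Bhood(x^*)$ and in $n$.

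For (b), the bound is immediate: each $\xi^i_v(n)$ and each $\pi^i_v(X(n))$ lies in $[0,1]$ and each block sums to $1$, so $\|U^i_n\|^2\le 2$ and $|\langle\theta,U_n\rangle|\le\sqrt{2m}$. Combining (a), (b) and the elementary inequality above produces the uniform bound $\E[\langle\theta,U_n\rangle^+\mid X(n)=x,\F_n]\ge d\,p_0^2/(2\sqrt{2m})=:c>0$. I do not anticipate a real obstacle; the only subtlety is noticing that the simple quadratic identity $\sum_{v,u}(\theta^i_v-\theta^i_u)^2=2d\|\theta^i\|^2$ is precisely what turns the tangent-space constraint on $\theta$ into a genuine lower bound, rather than a bound that degenerates when $\theta^i$ is constant on $[d]$ (which the constraint excludes).
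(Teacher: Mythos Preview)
Your proposal is correct and takes a genuinely different route from the paper's own proof.

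The paper argues constructively: for each $\theta$ it singles out one favourable outcome $A=\bigcap_i\{W^i(n+1)=v^i\}$, where $v^i$ maximises $\theta^i_v$, shows that on $A$ one has $\langle\theta,U_n\rangle=\sum_i\max_v\theta^i_v-\sum_i\langle\theta^i,\pi^i(x)\rangle$, and then bounds this single contribution from below by $s(x)=\frac{1}{2md}\big(\min_{i,v}\pi^i_v(x)\big)^{m+1}$ via a string of elementary inequalities exploiting $\theta\in\TsD_1$. Continuity of $s$ and $s(x^*)>0$ (since $x^*=\pi(x^*)\in\iD$) give the neighbourhood and constant.

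Your approach is the more standard analytic one: reduce to a conditional-variance lower bound through $\E[Y^+]=\tfrac12\E[|Y|]\ge \E[Y^2]/(2\|Y\|_\infty)$ for centred bounded $Y$, then exploit the block-diagonal conditional covariance of $\xi(n)$ (conditional independence of the walks) together with the categorical variance identity and the tangent constraint $\sum_v\theta^i_v=0$. This is cleaner and yields a more explicit constant in terms of $p_0=\min_{i,v}\pi^i_v$ on the neighbourhood. The paper's approach, by contrast, never passes through second moments and gives a pointwise inequality $\E[\langle\theta,U_n\rangle^+\mid X(n)=x]\ge s(x)$ valid on all of $\D$, not just near $x^*$.

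Two small caveats. First, in the paper $\TsD_1$ is the $\ell^1$-unit sphere $\{\theta\in\TsD:\sum_{i,v}|\theta^i_v|=1\}$, not the $\ell^2$-sphere, so your line ``$\|\theta\|^2=1$'' should be replaced by $\|\theta\|_2^2\ge 1/(md)$; this only changes the final constant. Second, your parenthetical ``bounded away from $0$ on any compact set avoiding $\partial\D$'' is unnecessarily cautious: $\pi^i_v$ is a softmax and hence strictly positive on all of $\D$, so any compact neighbourhood of $x^*$ works.
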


\begin{proof}
It is sufficient to show that, for all $n > 0$, $x \in \D$, and
$\theta \in \TsD_1$, we have that
\begin{equation}\label{Edesig}
 \E\Big[ \big\langle\theta, U_n  
   \big\rangle^+ \Big{|} X(n) = x,  \,\F_n \Big] \geq s(x),
\end{equation}
where $s:  \D \to \mathbb{R}$ is a continuous function such that
$s(x^*) > 0$.

Let 
\begin{equation}\label{eqn:defs}
 s(x) = \frac{1}{2 m d} \Big(\min_{i,v} \pi_{v}^{i} \big(x\big)
 \Big)^{m+1}.
\end{equation}

Clearly $s$, as defined in (\ref{eqn:defs}), is continuous. Since
$F(x) = -x + \pi(x)$ and since $F(x^*) = 0$, we have that $\pi(x^*) =
x^*$. As a consequence, $s(x_*) > 0$, where the previous inequality
holds because $x^*$ belongs to the interior of $\D$.

It remains show (\ref{Edesig}).  Let $\theta \in \TsD_1$. For each
walk $i \in [m]$, choose a vertex $v^i \in \{1,2,\ldots, d\}$, such
that
\[
  \theta_{v^i}^i  =  \max_v \theta_v^i.
\]
Now, define the event $A = \bigcap_{i \in [m]} \{\xi_{v^i}^i(n)
= 1\}$, with $\xi$ as defined by (\ref{eqn:xi}). That is, $A$ is the
event in which each walk $i$  makes a transition to vertex
$v^i$ at time $n+1$, $i = 1, 2, \ldots, m$. For all $n \geq 0$, we
have that for all $\theta \in \TsD_1$,
\begin{equation}\label{Edesig33}
 \E\Big[\big\langle\theta, U_n\big\rangle^+  \Big  | \, X(n) = x, \F_n
 \Big ] 
 =
 \E\Big[\big\langle\theta, U_n\big\rangle^+  \Big  | \, X(n) = x \Big
 ] 
 \geq
 q(x, \theta)
\end{equation}
where
\begin{equation} \label{eqn:qn}
 q(x, \theta)
 =
 \E\Big[\big\langle\theta, U_n\big\rangle^+  \Big  | \, A,\, X(n) = x 
 \Big]\Prb\big(A\, | \, X(n) = x\big).    
\end{equation}
 
To see that (\ref{Edesig33}) holds, note that the first equality
follows because the distribution of $U_n$ is uniquely
determined by $X(n)$ according to (\ref{eqn:gamma_and_U}).  The
inequality in (\ref{Edesig33}) holds because $\langle\theta,
U_n\rangle^+$ is non-negative. Now, to show (\ref{Edesig}), it is
sufficient to prove that for all $\theta \in \TsD_1$ and $x \in \D$
\begin{equation}\label{inetheta}
  q(x, \theta) \geq s(x).
\end{equation}

To show (\ref{inetheta}),  we show first that
\begin{equation}\label{eqn:defs-novo}
  q(\theta, x)
  =
  \Big [\sum_{i} \max_v \theta_{v}^i  - \sum_{i}
   \big\langle \theta^i, \pi^i  
   \big (x \big ) \big\rangle \Big ]^+  \prod_{i=1}^m
  \pi_{v^i}^{i}(x).
\end{equation}

To show (\ref{eqn:defs-novo}), note that, given $X(n) = x$, the
transitions of the walks are  independent, and therefore,   
\begin{equation}
\label{eqn:condprob}
\Prb\big(A\, | \, X(n) = x \big) = \prod_{i=1}^m \pi_{v^i}^{i} (x).
\end{equation}

To conclude the proof of (\ref{eqn:defs-novo}), we show that, given
$X(n) = x$ and $A$, we have that $\big\langle\theta, U_n\big\rangle =
\sum_{i} \max_v \theta_{v}^i - \sum_{i} \big\langle \theta^i, \pi^i
\big (x \big ) \big\rangle$.  According to (\ref{eqn:gamma_and_U}), we
have $(U_n)_v^i = \xi_v^i(n) - \E[\xi_v^i(n)\mid\F_n] = \xi_v^i(n) -
\pi_v^i(X(n))$, where, by (\ref{eqn:xi}), $\xi_v^i(n) = \Ind\{W^i(n+1)
= v\}$. Let $\delta_{v,v^i} = 1$ if $v = v^i$ and zero otherwise. So,
given $X(n) = x$ and $A$, it follows that $(U_n)_v^i = \delta_{v,v^i}
- \pi_v^i(x)$ and therefore
\begin{align*}
\big\langle\theta, U_n\big\rangle 
&=
\sum_{i,v} \theta_v^i \Big  (\delta_{v,v^i} - \pi_v^i \big
(x \big) \Big )                                              \\ 
&=
\sum_{i} \theta_{v^i}^i  - \sum_{i} \big\langle \theta^i, \pi^i 
\big (x \big ) \big\rangle  \\
&=
\sum_{i} \max_v \theta_{v}^i  - \sum_{i} \big\langle \theta^i, \pi^i 
\big (x \big ) \big\rangle. 
\end{align*}

Next we use (\ref{eqn:defs-novo}) to show (\ref{inetheta}). 
For $\theta^{i} \in \mathbb{R}^{d}$, we set  $(\theta^i)^+ =
((\theta_1^i)^+, (\theta_2^i)^+, \ldots,$ $(\theta_d^i)^+)$,  $(\theta^i)^- =
((\theta_1^i)^-, (\theta_2^i)^-, \ldots,$ $(\theta_d^i)^-)$,
and $\TsD_1 = \big \{ \theta \in \TsD  \,  : \, \sum_{iv} | \theta_v^i | = 1  \big \}$. 
To save notation, we set  $y = \pi(x)$. Now observe that
\begin{align*}
 q(\theta, x)
&=
 \Big [\sum_{i} \max_v \theta_{v}^i  - \sum_{i} \big\langle \theta^i,
 y^i \big\rangle \Big ]^+   \prod_{i=1}^m y_{v^i}^{i}                \\ 
&\geq
\Big [\sum_{i} \max_v \theta_{v}^i  - \sum_{i} \big\langle \theta^i, 
y^i \big\rangle \Big ]^+  \Big(\min_{i,v} y_v^i  \Big)^m            \\ 
&=
 \Big [\sum_{i} \max_v \theta_{v}^i  - \sum_{i} \big\langle
 (\theta^i)^+ - (\theta^i)^-,  
 y^i \big\rangle \Big ]^+  \Big(\min_{i,v} y_v^i  \Big)^m          \\ 
&=
 \Big [\sum_{i} \Big (\max_v \theta_{v}^i 
-  \big\langle (\theta^i)^+, 
y^i \big\rangle   \Big  ) 
+ \sum_{i} \big\langle (\theta^i)^-, 
y^i \big\rangle\Big ]^+  \Big(\min_{i,v} y_v^i  \Big)^m        \\ 
&\geq \Big [
 \sum_{i} \big\langle (\theta^i)^-, 
y^i \big\rangle\Big ]^+  \Big(\min_{i,v} y_v^i  \Big)^m        \\
&\geq \Big [
 \frac{1}{2 m d}
 \Big(\min_{i,v} y_{v}^i\Big) \Big ]^+  \Big(\min_{i,v} y_v^i  \Big)^m
  \\ 
&= 
\frac{1}{2 m d}  \Big(\min_{i,v} y_v^i  \Big)^{m+1}, 
\end{align*}
which shows (\ref{inetheta}) as claimed. Above, the first inequality
holds because $0 \leq \min_{i,v} y_v^i \leq y_{v^i}^i \leq 1$ for all
$i$.  The second inequality holds because $\max_v \theta_{v}^i \geq
(\theta^i)_v^+$ all $i$ and $v$, and because $y^i$ is a probability
measure for all $i$, and therefore, $\max_v \theta_{v}^i - \big\langle
(\theta^i)^+, y^i \big\rangle \geq 0$ for all $i$. To show the last
inequality, it is sufficient to show that
\begin{equation}\label{xthetaquota}
\sum_{i=1}^m \big\langle  (\theta^i)^-, y^i \big\rangle    
\geq 
\frac{1}{2 m d}\min_{i,v}\{y_{v}^i \}.
\end{equation}

To verify (\ref{xthetaquota}), observe that
\begin{align*}
  \sum_i\big\langle (\theta^i)^-, y^i \big\rangle
 =
   \sum_{i,v} y_v^i (\theta_v^i)^-
 &\geq
   \min_{i,v}\{y_{v}^i\} \sum_{i,v} (\theta_v^i)^-  \\
 &\geq
  \min_{i,v}\{y_{v}^i\} \max_{i,v} (\theta_v^i)^- \geq
  \min_{i,v}\{y_{v}^i\} \frac{1}{2 m d}.
\end{align*}

The last inequality is justified by observing that $\max_{i,v}
(\theta_v^i)^- \geq \frac{1}{2 m d}$. To check this, we show that $1
\leq 2 m d \max_{i,v} (\theta_v^i)^-$. Since $\theta \in \TsD_1$, it
follows that $1 = \sum_{i,v} |\theta_v^i|$ and therefore
\begin{align*}
1 = \sum_{i,v} |\theta_v^i| 
&= 
 \sum_{i= 1}^m \Big ( \sum_{v=1}^d (\theta_v^i)^+ + \sum_{v=1}^d
(\theta_v^i)^- \Big )            \\
&= 
\sum_{i= 1}^m  2 \Big(\sum_{v=1}^d (\theta_v^i)^- \Big) 
\leq %\sum_{i= 1}^m  2 \Big( d \max_{i,v} (\theta_v^i)^- \Big) 
%= 
2 m d \max_{i,v} (\theta_v^i)^-.
\qedhere
\end{align*}
\end{proof}

We will use the following definitions and lemma for the proof of the
first item in Theorem~\ref{thm11}.

\begin{definition}[Attractor]\label{defattractor} A subset
  $A\subset\mathfrak{D}$ is an \textit{attractor for the semi-flow
    $\Phi=\{\phi_t\}_{t\ge 0}$ if the following conditions hold:} 
\begin{enumerate}[(i), nosep]
\item $A$ is non-empty, compact and invariant by $\Phi$, that is,
  $\phi_t(A)=A, \forall  t\ge 0$;
\item  $A$ has a neighborhood $W\subset\mathfrak{D}$ such that
$\textrm{dist}(\phi_t(x),A)\to 0$ as $t\to\infty$ uniformly in $x\in
W$, 
\end{enumerate}  
where $\textrm{dist}(p,A)=\inf_{a\in A} \Vert p-a\Vert$. The
\emph{basin} of $A$, $B(A)$, is the positively invariant open set
formed by the points $x \in \D$ such that dist$(\phi_t(x), A) \to 0$
as $t \to \infty$.
\end{definition}

\begin{lemma}[{\cite[Theorem, (b), p. 181]{HS1974}}]\label{lattractor}
Let $x^*$ be a linearly stable equilibrium point of the vector field
$F$ defined in (\ref{vectorfieldF}). Then $A=\{x^*\}$ is an attractor
for the semi-flow $\Phi$ induced by $F$.
\end{lemma}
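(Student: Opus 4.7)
The plan is to reduce Lemma \ref{lattractor} to the classical Lyapunov stability theorem for hyperbolic sinks. First I would observe that every equilibrium of $F$ automatically lies in the interior $\iD$: if $x^*=\pi(x^*)$ then, since the exponentials appearing in \eqref{eqn:pi} are strictly positive, every coordinate of $\pi(x^*)$ is positive, hence $x^*\in\iD$. The analysis is therefore purely local, carried out in Euclidean coordinates on the affine hull of $\D$, namely $x^*+\TsD$, on which $F$ restricts to a smooth vector field vanishing at $x^*$ with Jacobian equal to the restriction of $\JacF(x^*)$ to $\TsD$.

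Denote by $A$ the restriction of $\JacF(x^*)$ to $\TsD$. By linear stability, all eigenvalues of $A$ have strictly negative real parts, so the Lyapunov equation $A^{\top}P+PA=-I$ admits a unique symmetric positive-definite solution $P$. I would then use $V(y)=\langle y-x^*,P(y-x^*)\rangle$ as a local Lyapunov function. Writing $F(y)=A(y-x^*)+R(y)$ with $\|R(y)\|=O(\|y-x^*\|^2)$ by Taylor's theorem, a direct computation gives
\begin{equation*}
\frac{d}{dt}V\bigl(\phi_t(x)\bigr) = -\|\phi_t(x)-x^*\|^2 + O\bigl(\|\phi_t(x)-x^*\|^3\bigr).
\end{equation*}
Consequently, there exist $c>0$ and a sublevel set $W=\{V\le r\}\subset\iD$ of $V$ small enough that $\frac{d}{dt}V(\phi_t(x))\le -cV(\phi_t(x))$ for every $x\in W$ while $\phi_t(x)\in W$. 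Since $\dot V<0$ on $W\setminus\{x^*\}$, the set $W$ is forward invariant, and the bound extends to all $t\ge 0$. Integrating yields $V(\phi_t(x))\le e^{-ct}V(x)$, whence $\phi_t(x)\to x^*$ uniformly in $x\in W$. Together with the trivial non-emptiness, compactness and invariance of $\{x^*\}$, this verifies both conditions of Definition \ref{defattractor}.

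The only subtlety I anticipate is handling the fact that $F$ is defined on the manifold-with-boundary $\D$ rather than on an open set: because $x^*\in\iD$ by the observation above, one can choose $r$ small enough so that the closed sublevel set $W$ lies entirely inside $\iD$, where all standard ODE machinery (flow regularity, local linearization, smooth dependence on initial data) applies verbatim. Everything else---existence and positive-definiteness of $P$, the Taylor expansion of $F$, and the passage from infinitesimal dissipation of $V$ to exponential convergence---is routine, so the proof reduces essentially to this bookkeeping.
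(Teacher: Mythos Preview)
Your argument is correct and is precisely the classical Lyapunov-function proof of asymptotic stability for a hyperbolic sink; the paper itself gives no proof of this lemma but simply invokes the cited result from \cite{HS1974}. Your additional observation that any equilibrium of $F$ lies in $\iD$ (so that the standard ODE machinery applies locally without interference from $\partial\D$) is exactly the adaptation needed to apply the textbook theorem in the present setting.
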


% Benaim '99, p12
Let $\tau_0$ and $\tau_n = \sum_{k=1}^n \gamma_k$ for $n\geq 1$ with
$\gamma_k$ defined as in (\ref{eqn:gamma_and_U}). Let $Z = \{Z(t)\}$, $t
\in [0,\infty)$, be a continuous-time affine and piecewise constant
process defined by considering the linear interpolation of $X =
\{X(n)\}_{n \geq 0}$, that is
\begin{equation}
  \label{eqn:Z}
  Z(\tau_n + s) = X(n) + s \frac{X(n+1)-X(n)}{\tau_{n+1} - \tau_n},
  \qquad 0 \leq s \leq \gamma_{n+1},\quad n \geq 0.
\end{equation}

\begin{definition}\label{def:attainable} A point $x\in\D$ is said to
be \emph{attainable} by $Z = \{Z(t)\}$ if for each $t > 0$ and every
open neighborhood $U$ of $x$ 
\[
  \Prb\Big(\exists s \geq t : Z(s) \in U\Big) > 0.
\]
The set of attainable points of $Z$ is denoted by $\mathit{Att}(Z)$.
\end{definition}

\begin{proof}[Proof of Theorem \ref{thm11}]
Throughout let $X = \{X(n)\}_{n\geq 0}$ be the vertex occupation
measure process defined in (\ref{theX}) which satisfies the recursion
(\ref{eqn:SA}). Let $F$ be the smooth vector field defined in
(\ref{vectorfieldF}).

$(\mathit{i})$ Let $x^*$ be a linearly stable equilibrium of the
vector field $F$ and let $A = \{x^*\}$. The proof of the first
assertion follows from Theorem 7.3 in \cite{B99}, provided that
$\mathit{Att}(Z) \cap B(A) \neq \emptyset$, that is, provided the
basin of $A$ is attainable by the process $Z$ defined in
(\ref{eqn:Z}). It is sufficient to show that $\mathit{Att}(X) \cap
B(A) \neq \emptyset$ because $\lim_{n\to\infty} \gamma_n = 0$.  Here
$\mathit{Att}(X)$ refers to the set of points $x\in\D$ attainable by
$X$, that is, such that, for each open neighborhood $U$ of $x$ and
each $n_*\in\N$, we have that $\Prb(\exists n \geq n_*: X(n) \in U) >
0$ or, equivalently, $\exists n \geq n_*: \Prb(X(n) \in U) > 0$. Since
each equilibrium $x^*$ of $F$ is arbitrarily close to a rational point
$q \in \D$, it is sufficient to show that, for each such $q$, $n_*
\geq 0$ and $\varepsilon > 0$, there is a $n > n_*$ such that $\Prb
\big (|X(n) - q| < \varepsilon \big ) > 0$. To check this, let $q$
have components of the form $q_v^i = k_v^i/k$, where for $i \in [m]$,
$v \in [d]$, $k_v^i$ is a non-negative integer and $k$ is a positive
integer. Note that, since $\sum_v q_v^i = 1$, it follows that $\sum_v
k_v^i = k$ for all $i \in [m]$.  Now, consider the following sequence
of vertices $v^i(\bar n) \in [d]$, $i \in [m]$, $\bar n = 1, 2,
\ldots$, defined as follows. For each $i \in [m]$,  $v \in [d]$, and
$\bar n = 1, 2, \ldots$, we set 
\[
  v^i(\bar n) = v \quad \text{ if and only if }\quad \bar n \in
  \bigcup_{\ell =1}^\infty N_{\ell,v}^i 
\]
where, for each $i$, $v$, and $\ell = 1,2, \ldots$, the set
$N_{\ell,v}^i$ is defined as $N_{\ell,v}^i = \big\{n_{\ell,v}^i + 1,
n_{\ell,v}^i + 2,\cdots,n_{\ell,v}^i + k_v^i\big\}$, $n_{\ell,v}^i =
(\ell - 1)k + k_1^i + k_2^i + k_{v-1}^i$ for $v \geq 2$, and
$n_{\ell,v}^i = (\ell - 1) k$ for $v = 1$. In words, the sequence
$\{v^i(\bar n)\}_{\bar{n} \geq 1}$ is a cycling sequence of vertices
of $G$ for which the cycle, of length $k = k_1^i + k_2^i + \cdots +
k_{d}^i$, contains $k_v^i$ repetitions of vertex $v$.

Now, for each $n \geq 1$,  let $A_n$ be the event, in which the
process $W(\bar n)$ follows the vertex sequences $v(\bar n)$ up to
time $n$. That is
\[
  A_n = \bigcap_{\bar n = 1}^n \bigcap_{i = 1}^m  \Big\{W^i
  (\bar n) = v^i (\bar n) \Big\}.
\]

Choose $n = L k$, where $L$ is a sufficiently large integer, such that
$n > n_*$ and $n > m d/\varepsilon$. Given $A_n$, it holds that
$X_v^i(n) = (1 + L k_v^i) / (L k) = 1/ (L k) + q_v^i = 1/ n + q_v^i$,
in which case $|X(n) - q| = \sum_{i,v} |X_v^i(n) - q_v^i| = md/n <
\varepsilon$. Thus, assuming that $\Prb(A_n) > 0$, we have that
\[
  \Prb \big (|X(n) - q| < \varepsilon \big ) \geq \Prb \big (|X(n) -
  q| < \varepsilon \, \big | \,  A_n \big )\Prb \big (A_n \big ) =
  \Prb \big (A_n \big ) > 0.
\] 

It remains to show that $\Prb \big (A_n \big ) > 0$. Let $X_v^i(0) =
1$, and for $\bar n \in \{1,2, \ldots, n\}$, let $x_v^i(\bar n)$ be
the value of $X_v^i(\bar n)$ computed according to
(\ref{eqn:occupation_measure}) when $W^i(\bar n) = v^i(\bar n)$, $\bar
n = 1,2, \ldots, n$.  Since, for each $\bar
n \in \{1,2, \ldots, n\}$, we have that $W^i(\bar n)$, $i = 1,2,
\ldots, m$ are conditionally independent given $\{X(\bar n - 1) =
x(\bar n - 1)\}$, it follows, by (\ref{eqn:trans_prob}) and
(\ref{eqn:pi}) that
\begin{equation}
   \Prb \big (A_n \big ) = \prod_{\bar n = 1}^{n} \prod_{i=1}^m
   \pi_{v^i(\bar n)}^i(x(\bar n - 1)).
\end{equation}
Since $\pi_v^i(y) > 0$ for all $i, v$ and $y \in \D$, it follows that
$\Prb \big (A_n \big ) > 0$. This concludes the proof of the first
claim made in the theorem.

$(\mathit{ii})$ The proof of the second claim follows by Theorem~1 in
\cite{P90}. In order to use this result, we observe that all the
required assumptions and hypotheses required by this theorem are
easily verified for the vertex occupation measure process
$X=\{X_n\}_{n\geq 0}$ satisfying (\ref{eqn:SA}). Only the condition
determined by (\ref{eqn:NoiseboundBellow}) is more involved and
deserves special attention. This condition is satisfied in our case by
Lemma~\ref{lem:boundfrombellow}.

$(\mathit{iii})$ By Lemma \ref{lm:LimitSet}, the limit set of
$X=\{X(n)\}_{n\ge 0}$ is connected and contained in $\CR$, where
$\Phi=\{\phi_t\}_{t\ge 0}$ is the semi-flow induced by the vector
field $F$. Moreover, since $F$ is a continuous map on its compact
domain $\D$ with isolated zeros (equilibrium points), we have that $F$
has finitely many equilibrium points. Hence, if $L$ is the strict
Lyapunov function defined in Theorem \ref{th:Lyapunov}, then
$L\big(F^{-1}(0)\big)$ is a finite set. In this case, by Proposition
3.2 in \cite{B96}, it follows that $\CR$ is contained in the set of
equilibrium points. Since $\sL\big(\{X(n)\}\big)$ is connected, we
have proved that $\sL\big(\{X(n)\}\big)$ is an equilibrium point of
$F$ that may depend on $\omega$. However, because $\omega$ is an
arbitrary point of $\Omega$, we have that
\[
  \sum_{x\in F^{-1}(0)}\mathbb{P}\big(\lim_{n\to\infty}
  X(n)=x\big)=1.
\]
This concludes the proof of the theorem.
\end{proof}

\section{Proofs of Theorems~\ref{thm22}, \ref{thm33},
  \ref{th:TwoRWonK2}, and \ref{th:3RW}}
\subsection{Theorems~\ref{thm22} and \ref{thm33}}
In order to prepare for the proof of Theorems \ref{thm22} and
\ref{thm33}, we will provide conditions on the constants
$\alpha_u^{ij}\in\mathbb{R}$ under which the map $\pi$ defined in
\eqref{eqn:pi} has a unique fixed point or, equivalently, the vector
field $F$ defined in \eqref{vectorfieldF} has an unique equilibrium
point.

We will need the following injectivity result.

\begin{theorem}[Gale-Nikaid\^o \cite{DGHN1965}]\label{thmgn}
Let $n\ge 2$, $\Lambda\subset\mathbb{R}^n$ be an (open or closed)
convex set and $f:\Lambda\to\mathbb{R}^n$ be a differentiable map
whose Jacobian matrix $\JacF(x)$ is positive quasi-definite for all
$x\in\Lambda$. Then $F$ is injective on $\Lambda$.
\end{theorem}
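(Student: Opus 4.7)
The plan is to proceed by contradiction using a one-dimensional reduction along a line segment. Suppose there exist distinct points $a,b\in\Lambda$ with $f(a)=f(b)$. Since $\Lambda$ is convex, the segment $\gamma:[0,1]\to\Lambda$ defined by $\gamma(t)=(1-t)a+tb$ lies entirely in $\Lambda$. Set $h=b-a\ne 0$ and consider the scalar function
\[
  \varphi(t) = \langle h,\, f(\gamma(t))\rangle, \qquad t\in[0,1].
\]
Because $f(a)=f(b)$, one has $\varphi(0)=\langle h,f(a)\rangle=\langle h,f(b)\rangle=\varphi(1)$.

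Since $f$ is differentiable on $\Lambda$ (and $\gamma$ is affine), $\varphi$ is differentiable on $[0,1]$ with
\[
  \varphi'(t) = \langle h,\, \JacF(\gamma(t))\,h\rangle.
\]
By Rolle's theorem there exists $t^\ast\in(0,1)$ such that $\varphi'(t^\ast)=0$, that is, $\langle h, \JacF(\gamma(t^\ast))\,h\rangle=0$. I would then invoke the hypothesis that $\JacF(x)$ is positive quasi-definite for every $x\in\Lambda$. Recall that a real square matrix $A$ is positive quasi-definite when its symmetric part $\tfrac12(A+A^{\mathsf T})$ is positive definite, and consequently $\langle v, A v\rangle = \langle v, \tfrac12(A+A^{\mathsf T}) v\rangle > 0$ for every nonzero $v$. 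Applied at $x=\gamma(t^\ast)$ with $v=h\ne 0$, this yields $\langle h, \JacF(\gamma(t^\ast))\,h\rangle>0$, contradicting $\varphi'(t^\ast)=0$. Hence no such pair $(a,b)$ exists and $f$ is injective.

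The argument is short and essentially mechanical once the right auxiliary function is chosen; the only substantive input is the identification of the quasi-definiteness hypothesis with the strict positivity of the symmetrised quadratic form, so that Rolle's theorem produces a genuine contradiction. No finer structural property of $\Lambda$ beyond convexity is needed (the segment lies in $\Lambda$ whether $\Lambda$ is open or closed), and the proof makes no use of the dimension $n\ge 2$ beyond allowing $h$ to be an arbitrary nonzero vector. The main conceptual step to highlight in the write-up is the passage from the vector equation $f(a)=f(b)$ to the scalar identity $\varphi(0)=\varphi(1)$, because it is precisely this reduction that converts a global injectivity question into a local, Jacobian-based one amenable to Rolle's theorem.
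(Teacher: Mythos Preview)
Your argument is correct. In fact, once you observe that $\varphi'(t)=\langle h,\JacF(\gamma(t))h\rangle>0$ for every $t\in[0,1]$ by positive quasi-definiteness, you do not even need Rolle's theorem: $\varphi$ is strictly increasing on $[0,1]$, which already contradicts $\varphi(0)=\varphi(1)$.

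As for comparison with the paper: the paper does not give its own proof of this statement. Theorem~\ref{thmgn} is quoted from \cite{DGHN1965} (Theorem~6 there; see also \cite{TP1983}, Theorem~3) and used as a black box in the proof of Theorem~\ref{thm33}. Your line-segment reduction is the classical elementary proof for the quasi-definite case, and it is exactly the kind of argument one finds in the cited sources; the original Gale--Nikaid\^o paper proves considerably more general univalence results (e.g.\ for $P$-matrix Jacobians on rectangular regions), of which the quasi-definite case is a special instance admitting this short direct proof.
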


To be more precise, the statement above is Theorem $6$ of
\cite{DGHN1965} (see also Theorem 3 of \cite{TP1983}). We recall that
an $n\times n$ matrix $A=(a_{ij})$ with real entries is
\textit{positive quasi-definite} if its symmetric part, namely
$\frac12(A+A^T)$ is positive-definite, i.e. $x^T (A+A^T) x>0$ for all
non-null $n\times 1$ column matrix $x$.

\begin{proof}[Proof of Theorem \ref{thm33}] Since $\pi$ is a
continuous self-map of the compact convex set $\D$, by Brouwer's Fixed
Point Theorem, $\pi$ has at least one fixed point $x^*$ in
$\D$. Moreover, $x\in\D$ is a fixed point of $\pi$ if and only if $x$
is a zero of $F:\D\to \mathbb{R}^{dm}$ defined in
(\ref{vectorfieldF}). Hence, to prove that $\pi$ has a unique fixed
point in $\D$ it suffices showing that $F$ (as a map) is injective on
$\D$.

Let $x=(x_u^j)\in\D$. By (\ref{vectorfieldF}), the Jacobian of $F$ at
$x$ is the $dm\times dm$ matrix 
\[
\dfrac{\partial F_v^i}{\partial x_u^j}(x)=\begin{cases} -1 &
  \textrm{if}\quad u=v \quad\textrm{and}\quad j=i, \\[0.1in]    
\phantom{-}\alpha_{v}^{ij}\dfrac{\exp\big(\sum_{j=1}^m \alpha_v^{ij}
  x_v^j\big)\sum_{\substack{u=1\\u\neq v}}^d 
   \exp\big(\sum_{j=1}^m \alpha_u^{ij} x_u^j\big)
} {\left[\sum_{u=1}^d
   \exp\big(\sum_{j=1}^m \alpha_u^{ij} x_u^j\big)\right]^2} &
\textrm{if}\quad u=v \quad\textrm{and}\quad j\neq i, \\[0.3in]
-\alpha_{u}^{ij}\dfrac{\exp\big(\sum_{j=1}^m \alpha_v^{ij}
  x_v^j\big)\exp\big(\sum_{j=1}^m \alpha_u^{ij} x_u^j\big)
} {\left[\sum_{u=1}^d
   \exp\big(\sum_{j=1}^m \alpha_u^{ij} x_u^j\big)\right]^2}
& \textrm{if}\quad u\neq v \quad\textrm{and}\quad j\neq i, \\
\phantom{-} 0 & \textrm{if}\quad u\neq v \quad\textrm{and}\quad j=i.
\end{cases}
\]
where we used the hypothesis that $\alpha_u^{ij}=0$ if $i=j$.

In this way, we have that  
\begin{equation}\label{dFdx}
\frac{\partial F_v^i}{\partial x_u^j}(x)=\begin{cases} -1 &
  \textrm{if}\quad u=v \quad\textrm{and}\quad j=i, \\   
\phantom{-}\alpha_{v}^{ij} \pi_v^i(x) (1- \pi_v^i (x)) &
\textrm{if}\quad u=v \quad\textrm{and}\quad j\neq i, \\
-\alpha_{u}^{ij} \pi_v^i(x) \pi_u^i(x) 

& \textrm{if}\quad u\neq v \quad\textrm{and}\quad j\neq i,\\
\phantom{-} 0 & \textrm{if}\quad u\neq v \quad\textrm{and}\quad j= i. 
\end{cases}
\end{equation}

Since $\sum_{u=1}^d \pi_u^i(x)=1$, we have that for all $u\neq v$,
\[
  \pi_v^i(x)\pi_u^i (x)= \pi_v^i(x) \Bigg(1- \sum_{r\neq
    u}\pi_r(x)\Bigg)\le \pi_v^i(x) (1-\pi_v^i(x))\le \frac14.
\]
Putting it all together, we have that for each $(v,i) \in
[d]\times[m]$,
\[
  \sum_{(u,j)\neq (v,i)} \left| \frac{\partial F_v^i}{\partial
      x_u^j}(x)\right|\le  
  \sum_{\substack{j=1\\ j\ne i}}^m \frac14\left| \alpha_v^{ij}\right|
  +  \sum_{\substack{u=1\\ u\neq v}}^d \sum_{\substack{j=1\\ j\ne
      i}}^m \frac14 \left| \alpha_u^{ij}  \right\vert= \sum_{u=1}^d
  \sum_{\substack{j=1\\ j\ne i}}^m \frac14 \left| \alpha_u^{ij}
  \right\vert.
\]
Hence, if Condition (C1) holds, then  
\[
  \sum_{(u,j)\neq (v,i)} \left| \frac{\partial F_v^i}{\partial
      x_u^j}(x)\right|\le \frac14\sum_{u=1}^d \sum_{\substack{j=1\\
      j\ne i}}^m  \left| \alpha_u^{ij} \right\vert<1= 
 \left| \frac{\partial F_v^i}{\partial x_v^i}(x)\right|.
\]
This shows that the Jacobian matrix $\JacF(x)=\big((\partial
F_v^i/\partial x_u^j)(x)\big)$ is strictly row diagonally dominant.
By \eqref{eqn:strengths}, $\JacF(x)$ is also symmetric. Now we use a
result from Linear Algebra that asserts that every symmetric strictly
(row or column) diagonally dominant matrix with real entries and
positive diagonal entries is positive-definite. By Theorem
\ref{thmgn}, we have that $F$ is injective on $\mathfrak{D}$. We have
proved that $\pi$ has a unique fixed point in $\D$.  The proof is
concluded by applying Theorem \ref{thm11}.
\end{proof}

\begin{proof}[Proof of Theorem \ref{thm22}] Assume first that
Condition (C2) is true. By definition, the constants
$\alpha_v^{ij},$\, $v \in [d]$ and $i, j \in [m]$ satisfy
\eqref{eqn:strengths}. We claim that Condition (C3) is
true. In fact, $\alpha_v^{ii}=0$ for all $v,i$, and
\[
  \sum_{u=1}^d \sum_{\substack{j=1\\ j\ne i}}^m  \left| \alpha_u^{ij}
  \right\vert =d(m-1)\beta<4 
  \quad\textrm{for each}\quad (v,i)\in [d]\times[m]. 
\]
By Theorem~\ref{thm33}, the process $X(n) = (X^1(n), \ldots, X^m(n))$
converges almost surely to the unique equilibrium point of $F$. 

Now let us consider the case in which Condition (C1), rather than
Condition (C2), is true. Since $d=2$, we have that $u, v \in \{1,2\}$
and $x^i_u = 1- x_v^j$ for all $u \neq v$ and $j \in [m]$. In this
way, the map $\pi$ simplifies into
 \begin{align*}\label{eqn:p2}
   \pi^i_v(x)
   &= 
   \frac{\exp\big(\sum_{j\neq i} -\beta x_v^j\big)} {
   \exp\big(\sum_{j\neq i} -\beta x_v^j\big)+\exp\big(\sum_{j\neq i}
   -\beta (1-x_{v}^j)\big)}            \\
   &=
   \frac{1}{1+\exp\big(\sum_{j\neq i} -\beta
   (1-2x_{v}^j)\big)} 
\end{align*}
Hence,
\[
  \pi^i_v(x) = \psi\bigg(\sum_{j\ne i} x_v^j\bigg),
  \quad\textrm{where}\quad
  \psi(t)=\frac{1}{1+\exp\big(-\beta(m-1-2t)\big)}\cdot 
\] 
Hence, $x=(x_v^i)$ is a fixed point of $\pi$ if and only if
\begin{equation}\label{systemI}
   x_v^i = \psi\bigg(\sum_{j\ne i} x_v^j\bigg), \quad i=1,2,\ldots,m;
\quad v=1,2.
\end{equation}
The function $\psi $ is monotone, hence invertible. Therefore,
\eqref{systemI} is equivalent to
\begin{equation}\label{systemII}
 \psi^{-1}(x_v^i) = \sum_{j\ne i} x_v^j, \quad i=1,2,\ldots,m; \quad
 v=1,2. 
\end{equation}
In this way, for all $1\le i,k\le m$ with $i\neq k$ we have that
\[
  \psi^{-1}(x_v^i)-\psi^{-1}(x_v^k)=x_v^k-x_v^i.
\]
That is to say,
\[ 
  \psi^{-1}(x_v^i) + x_v^i=\psi^{-1}(x_v^k) + x_v^k.
\]
Defining $\varphi:\mathbb{R}\to\mathbb{R}$ as $\varphi(t)=\psi(t)+t$ 
leads to 
\[
  \varphi(x_v^i)=\varphi(x_v^k).
\]
Since $\beta \leq 2$, we have that
\[
  \varphi'(t)
  =
  \psi'(t)+1
  =-2\beta\frac{\exp\big(-\beta(m - 1 -
    2t)\big)}{\left[1+\exp\big(-\beta(m - 1 -2t)\big)\right]^2}+1>0, 
\]
implying that $\varphi$ is monotone, hence injective. Therefore,
\[
  x_v^i = x_v^k, \quad i\neq k, \quad v=1,2.
\]
We have proved that
\begin{equation}\label{equality}
  x_v^1=x_v^2=\ldots =x_v^m,\quad v=1,2.
\end{equation}
We claim that $x_v^i=\frac12$ for all $v,i$. By way of contradiction,
without loss of generality, suppose that $x_1^1>\frac12$. Then, by
\eqref{equality}, we have that $ \sum_{j>1} x_1^j >
\frac{m-1}{2}$. Replacing this in \eqref{systemI} and using the fact
that $\psi$ is decreasing gives
\[
   \frac12< x_1^1= \psi\bigg(\sum_{j>1}
     x_1^j\bigg)<\psi\Big(\frac{m-1}{2}\Big)=\frac12, 
\]
which is a contradiction. This shows that
$x^*=\big(\frac12,\frac12,\ldots,\frac12\big)$ is the unique fixed
point of $\pi$ and the unique equilibrium of $F$. The application of
Theorem~\ref{thm11} concludes the proof.
\end{proof}

\subsection{The proof of Theorem~\ref{th:3RW}}
We present first a couple of lemmas that will be used in
the proof of Theorem~\ref{th:3RW} and then conclude its
proof. Throughout this section, we will use $p \in \D$ to denote the
point
\[
   p = \big(\textstyle\frac12, \frac12, \frac12, \frac12, \frac12,
   \frac12\big). 
\]

\begin{lemma}\label{lem:eigenvalues}
Let $X=\{X(n)\}_{n\ge 0}$ be the process defined in
(\ref{theX}) and $F$ the vector field defined in
(\ref{vectorfieldF}). There is $\beta_0>2 $ such that for all
$\beta\ge \beta_0$ the following statements hold
\begin{enumerate}[(i), nosep]
\item There exists a unique $w_\beta\in
  \big(0,\frac{1}{\beta^3}\big)$  such that
  $w_\beta=1/\big(1+e^{2\beta(\frac12-w_\beta)}\big)$; 
\item The set $S$ defined as
\[
    S=\left\{\big(a,1-a,b,1-b,c,1-c\big):\{a,b,c\}
      =\left\{\frac12,w_\beta,1-w_\beta\right\}, a\neq b\neq c \right\}
\]
consists of linearly stable equilibrium points of the vector field $F$ 
\item $\mathbb{P}\big( \lim_{n\to\infty} X(n)=x\big)>0$ for each $x\in
 S$.
\end{enumerate}
\end{lemma}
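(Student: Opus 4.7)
The plan is to handle the three parts in order. Part (iii) will follow immediately from part (ii) via Theorem~\ref{thm11}(i), so the substantive work is in (i) and (ii). For part (i), I would consider $h(w)=1/(1+e^{2\beta(1/2-w)})-w$. One checks $h(0)=1/(1+e^\beta)>0$ and, for $\beta$ large, $h(1/\beta^3)\le e^{-\beta+2/\beta^2}-1/\beta^3<0$ because the exponential term dominates the polynomial one. The intermediate value theorem then yields a root in $(0,1/\beta^3)$. For uniqueness, with $f(z)=1/(1+e^{2\beta z})$, compute $h'(w)=2\beta f(1/2-w)(1-f(1/2-w))-1$; on $[0,1/\beta^3]$ this is bounded above by $2\beta e^{-\beta(1-2/\beta^3)}-1$, which is strictly negative for $\beta$ sufficiently large. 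Hence $h$ is strictly decreasing there and $w_\beta$ is unique.

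For part (ii), I would parameterize $\D=\triangle^3$ by $(a,b,c)=(x_1^1,x_1^2,x_1^3)\in[0,1]^3$, using $x_2^i=1-x_1^i$. A short computation shows the reduced vector field is $\tilde F_i(a,b,c)=-x_1^i+f(\sum_{j\ne i}x_1^j-1)$. Direct substitution, together with the defining relation $f(1/2-w_\beta)=w_\beta$ (which gives $f(w_\beta-1/2)=1-w_\beta$), verifies that each permutation of $(1/2,w_\beta,1-w_\beta)$ satisfies the equilibrium equations. By walk-exchange symmetry (the Jacobians at different permutations are similar via a coordinate permutation), it suffices to check linear stability at $(1/2,w_\beta,1-w_\beta)$. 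Using $f'(z)=-2\beta f(z)(1-f(z))$, the Jacobian at this point equals
\[
J = \begin{pmatrix} -1 & p & p \\ q & -1 & q \\ q & q & -1 \end{pmatrix},\qquad p=-\frac{\beta}{2},\quad q=-2\beta w_\beta(1-w_\beta).
\]
Writing $\mu=\lambda+1$, the characteristic polynomial reduces to $\mu^3-\mu\, q(q+2p)-2pq^2=0$. Since part (i) gives $w_\beta=O(e^{-\beta})$, one has $q=O(\beta e^{-\beta})$, $q(q+2p)=O(\beta^2 e^{-\beta})$, and $2pq^2=O(\beta^3 e^{-2\beta})$. Hence every root $\mu$ of this cubic tends to $0$ as $\beta\to\infty$, so every eigenvalue $\lambda=\mu-1$ satisfies $\Re(\lambda)\to-1$, and for $\beta\ge\beta_0$ sufficiently large all three have strictly negative real part. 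Part (iii) is then immediate: each $x\in S$ lies in $\iD$ and is a linearly stable equilibrium, so Theorem~\ref{thm11}(i) gives $\Prb(\lim_n X(n)=x)>0$.

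The main obstacle is the eigenvalue analysis in part (ii): the matrix $J$ is not symmetric, so one must control three potentially complex roots of a cubic uniformly in $\beta\ge\beta_0$. The scaling $w_\beta=O(e^{-\beta})$ against polynomial growth in $\beta$ forces the cubic to degenerate to $\mu^3=0$ in the limit, which yields $|\mu|\to 0$; the careful point is to extract a single $\beta_0>2$ that simultaneously guarantees existence of $w_\beta\in(0,1/\beta^3)$, the validity of the equilibrium substitution, and the spectral bound.
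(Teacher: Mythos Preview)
Your proposal is correct and follows essentially the same approach as the paper: intermediate value theorem plus monotonicity for (i), direct verification of the equilibrium equations and a Jacobian computation showing the characteristic polynomial degenerates to $(\lambda+1)^3$ as $\beta\to\infty$ for (ii), and Theorem~\ref{thm11}(i) for (iii). The only cosmetic difference is that you work with the reduced $3\times 3$ Jacobian on the parameterization $(a,b,c)\in[0,1]^3$ and invoke walk-exchange symmetry to treat a single permutation, whereas the paper keeps the full $6\times 6$ Jacobian (whose characteristic polynomial is the square of your cubic) and handles all permutations simultaneously via generic $\{a,b,c\}=\{\tfrac12,w_\beta,1-w_\beta\}$; the asymptotic argument is identical.
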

\begin{proof} $(i)$ Since
  \[
    \lim_{\beta\to\infty}
    \frac{\beta^3}{1+e^{\left(\beta-\frac{2}{\beta^2}  \right)}}=0,
\]
there exists $\beta_1>2$ be so large that
\begin{equation}\label{beq1}
\frac{1}{1+e^{\left(\beta-\frac{2}{\beta^2} \right)}} <
\frac{1}{2\beta^3}. 
\end{equation}
In what follows, we assume that $\beta\ge \beta_1$.

Let $\varphi:\mathbb{R}\to\mathbb{R}$ be the function defined by
$\varphi(t)=1/\big(1+e^{2\beta (\frac12-t)}\big)-t$. By \eqref{beq1},
we have that
\[
  \varphi(0)>0\quad\textrm{and}\quad
  \varphi\left(\frac{1}{\beta^3}\right)
  =
  \frac{1}{1+e^{(\beta-\frac{2}{\beta^2})}}-\frac{1}{\beta^3}<0.
\]
In this way, by the continuity of $\varphi$, we have that $\varphi$
has a zero in the interval $\big(0,\frac{1}{\beta^3}\big)$.  Using
\eqref{beq1} once more, we have that
\[
  \varphi'(t)=2\beta
  \frac{e^{2\beta\big(\frac12-t\big)}}{1+e^{2\beta\big(\frac12-t\big)}}
  \cdot\frac{1}{1+e^{2\beta\big(\frac12-t\big)}}-1\le
  \frac{2\beta}{1+e^{(\beta-\frac{2}{\beta})}}-1<0,\quad\forall t\in
  \big(0,\frac{1}{\beta^3}\big).
\] 
Hence, $\varphi$ is strictly decreasing on
$\big(0,\frac{1}{\beta^3}\big)$. Therefore, there exists a unique
$w_\beta\in \big(0,\frac{1}{\beta^3}\big)$ such that
$\varphi(w_\beta)=0$, i.e.,
$w_\beta=1/\big(1+e^{2\beta(\frac12-w_\beta)}\big)$.

$(\mathit{ii})$ We will prove now that every point of $S$ is an
equilibrium point of $F$.  By the definition of the vector field $F$,
we have that $x=(a,1-a,b,1-b,c,1-c)$ is an equilibrium point of $F$ if 
\[
  a=\psi\big(b+c\big), \quad
  b=\psi(a+c),\quad\textrm{and}\quad c=\psi(a+b),
\]
where $\psi:[0,2]\to\mathbb{R}$ is defined by
\[
  \psi(t)=\frac{1}{1+e^{2\beta (t-1)}}.
\]
Since $x\in S$, we have that $a+b+c=\frac32$. Hence, the following
conditions are sufficient for $x\in S$ to be an equilibrium point of
$F$:
\[
  a=\psi\Big(\frac32-a\Big)=\varphi(a),\quad
  b=\psi\Big(\frac32-b\Big)=\varphi(b),\quad
  c=\psi\Big(\frac32-c\Big)=\varphi(c),
\]
where $\varphi$ is the function used in the definition of $w_{\beta}$. 

In other words, $x=(a,1-a,b,1-b,c,1-c)$ is an equilibrium point of $F$
if and only if each value $u\in
\{a,b,c\}=\left\{\frac12,w_\beta,1-w_\beta\right\}$ is a fixed point
of $\varphi$. It is easy to verify that $u=\frac12$ is a fixed point
of $\varphi$. Moreover, $w_\beta$ is a fixed point of $\varphi$ by
definition. Finally, $u=1-w_\beta$ is a fixed point of $\varphi$
because
\[
  \varphi(1-w_\beta)
  =
  \frac{1}{1+e^{-2\beta\big(\frac12-w_\beta\big)}}
  =
  \frac{e^{2\beta\big(\frac12-w_\beta\big)}}{ 1+
    e^{2\beta\big(\frac12-w_\beta\big)}}
  =
  1-\frac{1}{1+e^{2\beta\big(\frac12 -
    w_\beta\big)}}1-\varphi(w_\beta)=1-w_\beta.
\]
We have proved that every point in $S$ is a fixed point of $F$.

It remains to prove that if $x=x(\beta)=(a,1-a,b,1-b,c,1-c)\in S$,
then $x$ is linearly stable for $\beta$ big enough. In fact, the
Jacobian matrix of $F$ at $x$ is given by
%\begin{small}
\[
  \begin{bmatrix}
    -1 & 0 & \psi'(b+c) & 0 &\psi'(b+c) & 0 \\
    0 & -1 & 0 &  \psi'(2-b-c)  & 0  &  \psi'(2-b-c) \\
    \psi'(a+c)  & 0 & -1 & 0 & \psi'(a+c)  & 0 \\
    0 &  \psi'(2-a-c)  & 0 & -1 & 0 & \psi'(2-a-c)   \\
    \psi'(a+b)  & 0 & \psi'(a+b) & 0 & -1 & 0 \\
    0 &  \psi'(2-a-b)  & 0 & \psi'(2-a-b) & 0 & -1 \\
  \end{bmatrix}.
\]
Using the elementary facts that $\psi'(t)=-2\beta \psi(t)
(1-\psi(t))$, $\psi(2-t)=1-\psi(t)$, $a=\psi(b+c)$, $b=\psi(a+c)$,
$c=\psi(a+b)$ and further defining
\begin{equation}\label{abc}
  \undd{a}=-2\beta a (1-a), \quad \undd{b}=-2\beta b (1-b),\quad
  \undd{c}=-2\beta c (1-c),
\end{equation}
the Jacobian can be written as
\[
  \begin{bmatrix}
    -1 & 0 & \undd{a} & 0 & \undd{a} & 0 \\
    0 & -1 & 0 &  \undd{a}  & 0  &  \undd{a}  \\
    \undd{b} & 0 & -1 & 0 & \undd{b}  & 0 \\
    0 &  \undd{b} & 0 & -1 & 0 & \undd{b}   \\
    \undd{c} & 0 & \undd{c} & 0 & -1 & 0 \\
    0 & \undd{c} & 0 & \undd{c} & 0 & -1 \\
  \end{bmatrix}.
\]
The characteristic polynomial of the Jacobian matrix is therefore
\begin{equation}\label{pcha}
  p_{x}(\lambda)=\left(\undd{a} \undd{b} + \undd{a}\undd{c} + 2
    \undd{a} \undd{b}\undd{c} + 
\undd{a} \undd{b} \lambda + \undd{a}\undd{c}\lambda -
(1+\lambda)(-\undd{b} \undd{c} + (1+\lambda)^2)\right)^2. 
\end{equation}
Since $\{a,b,c\}=\left\{\frac12,w_\beta,1-w_\beta\right\}$,
$w_\beta\in\left(0,\frac{1}{\beta^3}\right)$ and $1-w_\beta\in (0,1)$, 
we have that
\[
  \undd{a}\undd{b}<4\beta^2 \max \left\{
   \frac14,\frac{1}{\beta^3}\right\} \frac{1}{\beta^3}\le
 \frac{1}{\beta}.
\]
Likewise, we have that
\begin{equation}\label{abc4}
  \undd{a}\undd{b}<\frac{1}{\beta},\quad
  \undd{a}\undd{c}<\frac{1}{\beta}, \quad
  \undd{b}\undd{c}<\frac{1}{\beta}\quad\textrm{and}\quad
  \undd{a}\undd{b}\undd{c}<\frac{1}{\beta}.
\end{equation}
Each equilibrium point $x=x(\beta)=(a,1-a,b,1-b,c,1-c)\in S$ depends
on $\beta$. In particular, when $\beta\to\infty$, we obtain by
(\ref{pcha}) and (\ref{abc4}) that
\[
 \lim_{\beta\to\infty} p_{x(\beta)}(\lambda) =  (1+\lambda)^6.
\]
Hence, since the entries of $\mathit{JF}\big(x(\beta)\big)$ depend
smoothly on $\beta$, we conclude that if $\beta$ is big enough, say
$\beta\ge\beta_0$, then all the eigenvalues of
$\mathit{JF}\big(x(\beta)\big)$  will lie in an open ball centered at
$-1\in\mathbb{C}$ of radius $\frac12$. Therefore, they all will have
negative real parts, that is, $x(\beta)\in S$ will be a linearly
stable equilibrium point.
 
(iii) This follows from the first item in Theorem \ref{thm11}. 
\end{proof}

\begin{lemma}\label{a:unstable} If $\beta < 2$, then $p$ is linearly
stable. If $\beta > 2$, then $p$ is linearly unstable. 
\end{lemma}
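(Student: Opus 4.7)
The plan is to compute the Jacobian of $F$ at the equilibrium point $p$, which is already available from the preceding lemma, read off the eigenvalues, and check which of them have positive real part as $\beta$ crosses $2$.

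First I would verify that $p$ is indeed an equilibrium: plugging $a=b=c=\tfrac12$ into the fixed-point equations $a=\psi(b+c)$, $b=\psi(a+c)$, $c=\psi(a+b)$ with $\psi(t)=1/(1+e^{2\beta(t-1)})$ gives $\psi(1)=\tfrac12$, so the equations are satisfied. Next I would specialize the Jacobian matrix $\JacF(p)$ displayed in the proof of Lemma~\ref{lem:eigenvalues} to the case $a=b=c=\tfrac12$. Using the formulas $\psi'(t)=-2\beta\,\psi(t)(1-\psi(t))$ and $\psi(2-t)=1-\psi(t)$, every appearance of $\psi'$ evaluates to $-\beta/2$, and hence $\undd{a}=\undd{b}=\undd{c}=-\beta/2$. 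Set $\mu:=-\beta/2$.

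Now I would substitute $\undd{a}=\undd{b}=\undd{c}=\mu$ into the characteristic polynomial \eqref{pcha}. With $u:=1+\lambda$ the bracketed expression collapses to $-(u^3-3\mu^2 u-2\mu^3)$, which factors as $-(u+\mu)^2(u-2\mu)$. Thus the roots of $p_x(\lambda)$ are $\lambda=-1-\mu=-1+\beta/2$ (with multiplicity $4$, squaring the double root) and $\lambda=-1+2\mu=-1-\beta$ (with multiplicity $2$). Two of the eigenvalues of this ambient $6\times 6$ Jacobian correspond to directions transverse to the tangent space $\TsD$; to see that the remaining ones genuinely govern the behaviour on $\D$ I would take the parameterization $\theta=(\theta_1,-\theta_1,\theta_3,-\theta_3,\theta_5,-\theta_5)$ of $T_p\D$ and check directly that the restriction of $\JacF(p)$ to $T_p\D$ is the $3\times 3$ matrix $-(1+\mu)I+\mu J$ (where $J$ is the all-ones matrix), whose eigenvalues are exactly $-1-\mu$ (double) and $-1+2\mu$ (simple). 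Both values $-1+\beta/2$ and $-1-\beta$ are real, so the issue of real parts is trivial.

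Finally I would conclude: if $\beta<2$, then $-1+\beta/2<0$ and $-1-\beta<0$, so all eigenvalues of $\JacF(p)$ on $T_p\D$ are negative and $p$ is linearly stable; if $\beta>2$, then $-1+\beta/2>0$ is an eigenvalue with positive real part, so $p$ is hyperbolic and linearly unstable. The main (mild) obstacle is the bookkeeping step of factoring the sextic characteristic polynomial cleanly and verifying that the ``spurious'' eigenvalues coming from the ambient embedding do not interfere with the stability analysis on $\TsD$; once the factorization $(u+\mu)^2(u-2\mu)$ is in hand the sign analysis is immediate.
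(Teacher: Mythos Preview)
Your approach---compute $\JacF(p)$, extract the eigenvalues, and check their signs---is exactly the paper's. The paper simply records the characteristic polynomial of the ambient $6\times 6$ Jacobian as $(1+\lambda)^3\big(\tfrac{\beta}{2}-1-\lambda\big)^2(1+\beta+\lambda)$ and reads off the eigenvalues $-1$, $-1-\beta$, $-1+\tfrac{\beta}{2}$, concluding as you do. Note that your route through \eqref{pcha} yields different multiplicities ($-1+\tfrac{\beta}{2}$ quadruple, $-1-\beta$ double, no eigenvalue $-1$): the $6\times 6$ matrix displayed in Lemma~\ref{lem:eigenvalues} is not the ambient Jacobian on $\R^6$ but a version in which the constraint $x_2^j=1-x_1^j$ has already been substituted into the arguments of $\psi$, so its spectrum off $T\D$ differs from the true one. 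On $T\D$, however, the two matrices agree, and your explicit $3\times 3$ restriction $-(1+\mu)I+\mu J$---a step the paper omits---correctly gives the eigenvalues $-1+\tfrac{\beta}{2}$ (double) and $-1-\beta$ (simple) that govern stability, so the argument is sound.
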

\begin{proof}
The proof consists in studying $\sigma(\JacF(p))$, the set of
eigenvalues of the Jacobian matrix of $F$ at $p$. Relatively simple
calculations show that the characteristic polynomial of $\JacF(p)$
equals
\[
  (1+\lambda)^3 \Big(-1+\frac{\beta}{2} -\lambda\Big)^{2}
  (1+\beta + \lambda).
\]
The equilibrium $p$ is therefore hyperbolic. Further, up to
algebraic multiplicity, the eigenvalues of $\JacF(p)$ are
\[
 -1,\quad -1-\beta \quad\text{and}\quad -1+\frac{\beta}{2}.
\] 
This shows that $p$ is linearly stable when $\beta<2$ and linearly
unstable when $\beta > 2$.
\end{proof}

\begin{proof}[Proof of Theorem~\ref{th:3RW}]
 Let $W = \{W(n)\}_{n\geq 0}$ with $W(n) = (W^1(n)$, $W^2(n)$,
$W^3(n))$ be the process defined by $m = 3$ interacting random walks,
taking values on the complete graph $G$ with vertices $V = \{1,2\}$,
such that for all $n\ge 1$ and any $i \in [m]$,
\[
\begin{array}{l}
  \big\{W^i(n) = 1\big\} = \big\{S_{n}^i - S_{n-1}^i = -1\big\}, 
  \\[.35em]
  \big\{W^i(n) = 2\big\} = \big\{S_{n}^i - S_{n-1}^i = +1\big\}.
\end{array}
\]

Now, let $X_v^i(0) = 1$ for all $v \in [d] = \{1,2\}$ and $i \in [m]$,
and then, for $n \geq 1$ define $X_v^i(n)$ as in
(\ref{eqn:occupation_measure}).  Notice that $X_1^i(n)$ and $X_2^i(n)$
are the proportions of times the $i$-th walk, that is $S^i(n)$, makes
a transition to the left and to the right, respectively. Finally, for
$v \in [d]$ and $i,j \in [m]$, set
\[
  \alpha_v^{ij}
  =
  \begin{cases}
    -\beta, \ &\text{ if }\ \ i \neq j,\\
    0,  \ &\text{ if }\ \ i = j
  \end{cases}
\]
and let $\beta \geq 0$.  Using (\ref{eqn:transProb}) and (\ref{eqn:psi}),
it is readily seen that the transition probability for $W^i$ is given
by (\ref{eqn:trans_prob}). Indeed, since $X^i_1(n) = 1- X^i_2(n)$,
\begin{align*}
\label{eqn:trans_prob-Sn}
  \Prb\big(W^i(n+1) =  2 \mid\F_n\big)
  &=
    \Prb\big( S_{n}^i -  S_{n-1}^i = +1 \mid 
    \mathcal{A}_n\big) \\  
  &=
    \mu\Big((S_{n}^j - S_{0}^j)/n + (S_{n}^k - S_{0}^k)/n\Big) \\ 
  &=
    \mu\Big(2X_2^j(n) -1 + 2X_2^k(n) -1 \Big) \\
  &=
   \frac{\exp \Big(-\beta(X_2^{j}(n) + X_2^k(n))\Big
    )}{\sum_{v=1}^2 \exp\Big(-\beta(X_v^{j}(n)+X_v^k(n))\Big)}
 = 
    \pi_2^i(X(n)).
\end{align*}

Observe that Lemma~\ref{lem:XSA} holds in this case. That is, $X$ is a
stochastic approximation with $F(x) = - x + \pi(x)$ and with $\xi$, $U_n$,
and $\gamma_n$ given as in Lemma~\ref{lem:XSA}.

To prove the first assertion of the theorem, let $\beta < 2$. By
Theorem~\ref{thm22}, $p$ is the only equibrium, which by
Lemma~\ref{a:unstable} is linearly stable.  Using item
($\mathit{iii}$) of Theorem~\ref{thm11} we have that $X(n) \to p$
almost surely. As a consequence, almost surely it holds that
\begin{align*}
   \lim_{n\to\infty}
   \Prb\big( S_{n}^i - S_{n-1}^i = +1 \mid \mathcal{A}_n\big)
   &=
   \lim_{n\to\infty}
   \Prb\big(W^i(n+1) =  2 \mid\F_n\big)      \\
   &=
   \lim_{n\to\infty} \pi_2^i(X(n)) = \pi_2^i(p) = \frac12
\end{align*}
where the last two equalities follow by continuity of $\pi$ and the
fact that $p = (\frac{1}{2}, \ldots, \frac{1}{2})$ is a fixed point of
$\pi$. This concludes the proof of the first part.

The second assertion of the theorem is proved as follows. For
sufficiently large $\beta$, namely when $\beta \geq \beta_0 > 2$,
Lemma~\ref{a:unstable} and Theorem~\ref{thm11}.(ii) rule out the
possibility of converging to $p$. Further, by the proof of
Lemma~\ref{lem:eigenvalues}, there is a $w > \frac12$, such that
$x=(w, 1-w, 1-w, w, \frac12, \frac12)$ is a linearly stable
equilibrium. By item (i) of Theorem~\ref{thm11}, it follows that $X(n)
\to x$ with positive probability. As shown previously we have that
$\Prb(W^i(n+1) = v | \F_n ) = \pi_v^i(X(n))$. Again, by continuity of
$\pi$ and using the fact that $x$ is a fixed point of $\pi$, we have,
with positive probability, that
\begin{align*}
   \lim_{n\to\infty}
   \Prb\big( S_{n}^1 - S_{n-1}^1 = +1 \mid \mathcal{A}_n\big)
   &=
   \lim_{n\to\infty}
   \Prb\big(W^1(n+1) =  2 \mid\F_n\big)      \\
   &=
   \lim_{n\to\infty} \pi_2^1(X(n)) = \pi_2^1(x) = 1-w < \frac12
\end{align*}
Likewise,
\[
  \lim_{n\to\infty} \Prb\big( S_{n}^2 - S_{n-1}^2 = +1 \mid
  \mathcal{A}_n\big) = w > \frac12  
  \quad\text{and}\quad 
  \lim_{n\to\infty} \Prb\big( S_{n}^3 - S_{n-1}^3 = +1 \mid
  \mathcal{A}_n\big) = \frac12. 
\]
This concludes the proof.
\end{proof}

\subsection{The proof of Theorem~\ref{th:TwoRWonK2}}
The following lemma will be used for the proof of
Theorem~\ref{th:TwoRWonK2}. This lemma shows that the set of
equilibria for the example of two repelling walks on the two vertex
graph presented in Section~\ref{subsec:TwoRMonK2}, is finite for all
$\beta \geq 0$. This lemma also identifies the form of the equilibria
and characterises their stability.

\begin{lemma}\label{lem:TwoElephants_Equil}
Let $\Phi$ be the semi-flow induced by the ODE (\ref{eqn:ODE}) with $F$
given by (\ref{eqn:ODEexplicit}). Then,
\begin{enumerate}[(i), nosep]
\item The point $(\frac{1}{2},\frac{1}{2}, \frac{1}{2},
\frac{1}{2})$ is the only equilibrium of $\Phi$ when $\beta \in
[0,2]$.
\item When $\beta>2$, there exist two further equilibria of the form
\[
 (a, 1-a, 1-a, a)\quad \text{ and }\quad (1-a, a, a, 1-a),
\]
where $a \in (0, \frac{1}{2})$ is uniquely determined by $\beta$. 
\item $(\frac{1}{2},\frac{1}{2}, \frac{1}{2},\frac{1}{2})$ is linearly
stable when $\beta \in [0,2)$ and linearly unstable when $\beta >
2$. The equilibria in (ii) are linearly stable when $\beta > 2$.
\end{enumerate}
\end{lemma}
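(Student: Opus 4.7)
My plan is to reduce the four-dimensional equilibrium problem to a single scalar equation by exploiting the simplex constraints and the natural symmetries of the system, and then to compute the Jacobian at each equilibrium directly.

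First, I would parametrize by $a = x^1_1$, $b = x^2_1$ (so $x^i_2 = 1 - x^i_1$) and, writing $g(t) = 1/(1 + e^{\beta(2t-1)})$, rewrite \eqref{eqn:ODEexplicit} so that equilibria correspond exactly to solutions of the planar system $a = g(b)$, $b = g(a)$. The crucial step is to rule out solutions with $a + b \neq 1$. For this I would introduce $\phi(t) = t - g(t)$: since $g$ is strictly decreasing, $\phi$ is \emph{strictly} increasing ($\phi'(t) > 1$), and the identity $g(1-t) = 1 - g(t)$ gives $\phi(1-t) = -\phi(t)$. The equilibrium condition forces $\phi(a) + \phi(b) = (a - g(a)) + (b - g(b)) = 0$, so $\phi(a) = \phi(1-b)$, and injectivity of $\phi$ yields $b = 1 - a$. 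Substituting this back reduces everything to the scalar equation $g(a) = 1-a$, i.e.,
\begin{equation*}
  r(a) := \log\frac{1-a}{a} - \beta(1 - 2a) = 0, \qquad a \in (0,1).
\end{equation*}

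The next step is a one-variable analysis of $r$. Key facts are $r(1/2) = 0$, the antisymmetry $r(1-a) = -r(a)$, the end behavior $r(0^+) = +\infty$ and $r(1^-) = -\infty$, and the derivative $r'(a) = 2\beta - 1/\bigl(a(1-a)\bigr)$. When $\beta \leq 2$ one has $a(1-a) \leq 1/4 \leq 1/(2\beta)$, so $r$ is strictly decreasing and $a = 1/2$ is its only zero, proving (i). When $\beta > 2$ the derivative is positive on an open interval $(a_-, a_+) \ni 1/2$ determined by $a(1-a) = 1/(2\beta)$ and negative outside; combining the resulting sign chart of $r$ on $(0, a_-)$, $(a_-, a_+)$, $(a_+, 1)$ with the antisymmetry $r(1-a) = -r(a)$ should produce exactly one extra pair of zeros $a_1 \in (0, a_-)$ and $1 - a_1 \in (a_+, 1)$, which yields (ii).

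For (iii), I would compute the $4\times 4$ Jacobian $JF = -I + D\pi$. Because $\pi^1$ depends only on $x^2$ and $\pi^2$ only on $x^1$, $D\pi$ has the antidiagonal block structure
\begin{equation*}
  D\pi = \begin{pmatrix} 0 & \rho_1 J_0 \\ \rho_2 J_0 & 0 \end{pmatrix},
  \qquad J_0 = \begin{pmatrix} -1 & 1 \\ 1 & -1 \end{pmatrix}, \qquad
  \rho_i = \beta\,\pi^i_1\pi^i_2,
\end{equation*}
with $J_0$ having eigenvalues $0$ and $-2$. A short block-eigenvector calculation should then give $\sigma(JF) = \{-1, -1, -1 + 2\sqrt{\rho_1 \rho_2}, -1 - 2\sqrt{\rho_1 \rho_2}\}$, so stability is governed by the sign of $-1 + 2\sqrt{\rho_1 \rho_2}$. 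At $(1/2,1/2,1/2,1/2)$ one has $\rho_1 = \rho_2 = \beta/4$, giving the leading eigenvalue $-1 + \beta/2$, negative exactly for $\beta < 2$. At $(a_1, 1-a_1, 1-a_1, a_1)$ one has $\rho_1 = \rho_2 = \beta a_1(1-a_1)$, and the inequality $a_1 < a_-$ from the previous step, together with the monotonicity of $t\mapsto t(1-t)$ on $(0,1/2)$, gives $\beta a_1(1-a_1) < \beta a_-(1-a_-) = 1/2$, so the leading eigenvalue is again negative. The mirror equilibrium is handled identically by the $a \leftrightarrow 1-a$ symmetry. The main potential obstacle is the $a + b = 1$ reduction; once the strict monotonicity of $\phi$ is in hand, everything else reduces to a single-variable sign analysis plus a block-matrix eigenvalue computation.
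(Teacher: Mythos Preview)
Your proposal is correct and gives a complete, self-contained proof. In the paper, this lemma is not proved directly: the authors simply cite Lemma~8 of \cite{CPR20}. So your argument is not a mere reworking of the paper's proof but an actual substitute for an external reference.

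A few brief remarks on the argument itself. The reduction to $a+b=1$ via the strictly increasing function $\phi(t)=t-g(t)$ and the symmetry $\phi(1-t)=-\phi(t)$ is clean and airtight; this is the only place where one might worry about ``off-diagonal'' equilibria with $a+b\neq1$, and you dispose of it correctly. The scalar analysis of $r(a)=\log\frac{1-a}{a}-\beta(1-2a)$ is standard: for $\beta\le2$ one has $r'\le0$ (with equality only at $a=1/2$ when $\beta=2$), so $a=1/2$ is the unique zero; for $\beta>2$ the two critical points $a_\pm$ satisfying $a(1-a)=1/(2\beta)$ give the three-piece sign chart and hence exactly three zeros $a_1<1/2<1-a_1$ with $a_1<a_-$. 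For the Jacobian, your block decomposition is correct, and since the $0$-eigenvector $(1,1)$ of $J_0$ is normal to each simplex while the $-2$-eigenvector $(1,-1)$ is tangent, the two eigenvalues $-1\pm2\sqrt{\rho_1\rho_2}$ are precisely the ones acting on $T\D$; the remaining eigenvalues $-1,-1$ are in the normal directions and do not affect the stability classification. Finally, the inequality $\beta a_1(1-a_1)<1/2$ follows exactly as you say from $a_1<a_-$ and the monotonicity of $t(1-t)$ on $(0,1/2)$, with the inequality strict, so the two non-central equilibria are genuinely hyperbolic and linearly stable.
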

\begin{proof}
See Lemma 8 in \cite{CPR20}.
\end{proof}

\begin{proof}[Proof of Theorem~\ref{th:TwoRWonK2}] From
Lemma~\ref{lem:TwoElephants_Equil} it follows that $\Eq$ is formed by
isolated points. The proof is therefore concluded by direct
application of item ($\mathit{iii}$) in Theorem~\ref{thm11}.
\end{proof}

\begin{remark}\label{rem:BH}
The convergence of $X=\{X(n)\}_{n \geq 0}$ towards $\Eq$ can be
established in this example without using Theorems~\ref{th:Lyapunov}
and \ref{thm11}. An alternative proof is obtained from Theorem 6.12,
Corollary 6.13 and Theorem 6.15 in \cite{BH99}; see also Theorem 3.2
and Corollary 3.3 in \cite{B99}. In order to be able to use these
results, the semi-flow $\Phi$ defined by vector field determined by
the interacting random walks has to be planar. This is indeed the case
in this example. To observe this, it suffices to identify $\D$ with
$[0,1]^2$ by using the map $\eta: (a, 1-a) \mapsto a$ for $a \in
[0,1]$, and then consider the projection of the field on $[0,1]^2$
given by $F=(F^1_1, F^2_1)$. These steps cannot be carried out in the
examples of Section~\ref{subsec:weakRW} nor in the example presented
in Section~\ref{subsec:3RW}. More generally, the arguments presented
in the mentioned literature cannot be used when $m\geq 3$. The main
reason is that the dynamics induced by three or more interacting
random walks cannot be identified with a subset of the plane. Indeed,
when $m \geq 3$, the domain $\D$ may be identified via $\eta$ with the
$m$ dimensional unit-cube $[0, 1]^m \subset \R^m$.
\end{remark}

\bibliographystyle{authordate1}% natbib: dinat,humannat,plainnat,... 
%\bibliographystyle{siam} % not-natbib: siam, alpha, apalike,...
%\bibliography{reinforced}

\vspace{1cm}
\end{document}
%%% Local Variables:
%%% mode: latex
%%% TeX-master: t
%%% End: